\newcommand{\bP}{\bm{P}}
\newcommand{\bnu}{\bm{\nu}}
\newcommand{\bpi}{\bm{\pi}}
\newcommand{\be}{\bm{e}}
\newcommand{\bd}{\bm{d}}
\newcommand{\bw}{\bm{\omega}}
\newcommand{\bpsi}{\bm{\psi}}
\newcommand{\bth}{\bm{\theta}}
\newcommand{\bla}{\bm{\lambda}}
\newcommand{\bsi}{\bm{\sigma}}
\newcommand{\bga}{\bm{\gamma}}
\newcommand{\bmu}{\bm{\mu}}
\newcommand{\br}{\bm{r}}
\newcommand{\bx}{\bm{x}}
\newcommand{\by}{\bm{y}}
\newcommand{\bz}{\bm{z}}
\newcommand{\bo}{\bm{0}}
\newcommand{\Dz}{\Delta \bm{z}}
\newcommand{\Dd}{\Delta \bm{d}}
\newcommand{\DP}{\Delta \bm{P}}
\newcommand{\Dpsi}{\Delta \bm{\psi}}
\newcommand{\Dmu}{\Delta \bm{\mu}}
\newcommand{\Dw}{\Delta \bm{\omega}_\mathcal{G}}
\newcommand{\bze}{\bm{\zeta}}
\newtheorem{theorem}{\textbf{Theorem}}
\newtheorem{proposition}{\textbf{Proposition}}
\newtheorem{lemma}{\textbf{Lemma}}
\newtheorem{assumption}{\textbf{Assumption}}
\newtheorem{remark}{\textbf{Remark}}
\newcommand{\add}[1]{{\color{black}{#1}}}
\title{Distributed Automatic Load-Frequency Control with Optimality in Power Systems}
\author{Xin Chen, 
	Changhong Zhao,
	~Na Li
	\thanks{  X. Chen and N. Li are with the School of Engineering and Applied Sciences, Harvard University, USA. Email: (chen\_xin@g.harvard.edu, nali@seas.harvard.edu).}
	\thanks{C. Zhao is with the Department of Information Engineering, the Chinese University of Hong Kong, China. Email:  zhchangh1987@gmail.com.}
	\thanks{ 
		The work was supported by
		NSF 1608509, NSF CAREER 1553407, AFOSR YIP, and ARPA-E through the NODES program.
} 
}
\begin{document}

\maketitle

\begin{abstract}
	With the increasing penetration of renewable energy resources, power systems face new challenges in  maintaining power balance and the nominal frequency. This paper studies load control to handle these challenges. In particular, a fully distributed automatic load control (ALC) algorithm, which only needs local measurement and local communication, is proposed. We prove that the load control algorithm globally converges to an optimal operating point which minimizes the total disutility of users, restores the nominal frequency and the scheduled tie-line power flows, and respects the load capacity limits and the thermal constraints of transmission lines.
	 It is further shown that the asymptotic convergence still holds even when inaccurate system parameters are used in the control algorithm.  
	\add{In addition, the global exponential convergence of the reduced ALC algorithm without considering the capacity limits is proved and leveraged to study the dynamical tracking performance and robustness of the algorithm.	}
	Lastly, the effectiveness, optimality, and robustness of the proposed algorithm are demonstrated via numerical simulations. 
\end{abstract}

\begin{IEEEkeywords}
	Distributed algorithm, frequency regulation, automatic load control, power networks.
\end{IEEEkeywords}

\section{Introduction}

\IEEEPARstart{I}{n} power systems, generation and load are required to be balanced all the time. Once a mismatch between generation and load occurs, the system frequency will deviate from the nominal value, e.g., 50 Hz or 60 Hz, which may undermine the electric facilities and even cause system collapse.  Hence, it is crucial to maintain the frequency closely around its nominal value. Traditionally, the generator-side control \cite{3m} plays a dominant role in frequency regulation, where the generation is managed to follow the time-varying load. However, \add{with the rapid proliferation of renewable energy resources, such as wind power and solar energy, it becomes more challenging to maintain power balance and the nominal frequency due to the increasing volatility in renewable generation.
}

To address these challenges, as a promising complement to generation control, load control has received considerable attention in the recent decade.
Because controllable loads are ubiquitously distributed in power systems and can respond fast to regulation signals or frequency deviation \cite{r_r}. 
There has been a large amount of research effort devoted to frequency regulation provided by controllable loads, including electric vehicles \cite{ev1, ev2}, heating, ventilation and air-conditioning systems \cite{hvac1}, energy storage systems \cite{bat1, bat2}, and thermostatically controlled loads \cite{tcl}. Several demonstration projects \cite{p1, p2, p3} verified the viability of load-side participation in frequency regulation. The literature above focuses on modeling and operating the loads for frequency regulation, and leaves the development of system-wide optimal load control techniques as an unresolved task.

For load-side frequency control, centralized methods \cite{cen1, cen2} need to exchange information over remotely connected control areas, which imposes a heavy communication burden with expanded computational and capacity complexities \cite{lr2}. This concern motivates a number of studies on distributed control methods. In \cite{dec1, dec2, dec3}, load control is implemented by solving a centralized optimization problem using appropriate decomposition methods. The decomposition methods generate optimal control schemes that respect the operational constraints, but their convergence relies on network parameters.
In \cite{pi1}, a distributed proportional-integral (PI) load controller is designed to attenuate constant disturbances and improve the dynamic performance of the system, whereas operational constraints, such as load power limits and line thermal constraints, are not taken into account. References \cite{re1, sm,the} reversely engineer power system dynamics as primal-dual algorithms to solve optimization problems for frequency regulation, and prove global asymptotic stability of the closed-loop system independently of control parameters. \add{Specifically, reference \cite{re1}
 studies the economic \emph{automatic generation control} (AGC) mechanism and develops a distributed generator control scheme for frequency regulation. In \cite{sm}, a distributed load control method is proposed for \emph{primary frequency regulation},  which can only stabilize the frequency but not restore the nominal value. Reference \cite{the} is the most related work, which inspires this paper, while the key differences between the load control algorithms in this paper and in \cite{the}  are elaborated as Remark \ref{diff}.
 }

In this paper, we develop a fully distributed automatic load control (ALC) method for secondary frequency regulation. It can eliminate power imbalance, restore nominal system frequency, and maintain scheduled tie-line power flows in a manner that minimizes the total disutility of load adjustment.  
The development of the proposed ALC method is based on the interpretation of the closed-loop system dynamics as a primal-dual algorithm to solve a well-designed optimal load control  problem. The main contributions of this paper are twofold:
\add{
\begin{itemize}
    \item [1)] The sensing requirement and communication requirement are greatly alleviated with the proposed ALC method. Precisely, 
    the information of instant power imbalance is completely circumvented in the control process, and only  local measurement and local communication are required, which warrants a fully distributed operation mode.
    The key for 
     achieving these properties is a new reformulation (ref. model (\ref{op})) of the optimal load control problem, whose 
      partial primal-dual gradient flow with the variable substitution technique leads to 
    the design of  the proposed ALC algorithm.
    
    \item [2)] In addition to establishing the global asymptotic convergence of the ALC algorithm, we further prove the global \emph{exponential} convergence of the reduced ALC algorithm without considering the capacity limits. Then this fast convergence property is  leveraged to provide theoretic guarantees on  the algorithm's dynamical tracking performance and robustness. 
    The crux to prove the global exponential convergence is the novel design of a quadratic
    Lyapunov function (\ref{lya}) with non-zero off-diagonal terms.  
\end{itemize}
}

These contributions overcome the main limitations in the existing approaches reviewed above and facilitate practical implementations of the proposed ALC algorithm. 
Lastly, the effectiveness,  optimality,  and  robustness  of  the  proposed ALC algorithm are  demonstrated  via  numerical  simulations on the 39-bus New England power system 
using Power System Toolbox (PST) \cite{pst1}.


The remainder of this paper is organized as follows: Section \ref{sec:model} introduces the power network dynamic model and formulates the optimal load control problem. Section \ref{sec:algorithm} presents the proposed ALC algorithm and its global asymptotic convergence. 
 Section \ref{sec:dynamic} analyzes the global exponential convergence of the reduced ALC algorithm and its dynamical tracking error.
  Numerical tests are carried out in Section \ref{sec:simulations}, and conclusions are drawn in Section \ref{sec:conclusion}.

\add{

\textbf{Notations.} 
 Boldface letters are used for column vectors.
 $|\cdot|$ takes entry-wise absolute value of a vector (scalar) or denotes the cardinality of a set. $||\cdot||$ denotes the 2-norm of a vector or the induced 2-norm for matrices, and $||\bx||_Q:=\sqrt{\bx^\top Q\bx}$ with $Q\succeq 0$. We use $(\cdot)^\top$ for matrix transposition and $(\cdot)^{-1}$ for matrix inverse. For any two vectors $\bx,\by$, 
$[\bx;\by]: = [\bx^\top, \by^\top]^\top $ denotes their column merge.

}

\section{System Model and Problem Formulation}\label{sec:model}

\subsection{Dynamic Network Model }

Consider a power network delineated by a graph $G({\mathcal{N}},\mathcal{E})$, where  ${\mathcal{N}}:=\left\{1,\cdots,|\mathcal{N}|\right\}$  denotes the set of buses and $\mathcal{E}\subset {\mathcal{N}} \times {\mathcal{N}}$  denotes the set of transmission lines connecting the buses. Suppose that $G({\mathcal{N}},\mathcal{E})$ is connected and directed with arbitrary directions assigned to the transmission lines. Note that if $ij\in \mathcal{E}$, then $ji\not\in\mathcal{E}$. The buses $i\in \mathcal{N}$ are divided into two types: generator buses and load buses, which are denoted respectively by the sets $\mathcal{G}$ and $\mathcal{L}$ with $\mathcal{N}=\mathcal{G}\cup\mathcal{L}$. 
 A generator bus is connected to generators and may also have
loads attached, while a load bus is only connected to loads.

{\color{black} For notational simplicity, all the variables in this paper represent the deviations from their nominal values  that are determined by the previous solution of economic dispatch}. We consider the direct current (DC) power flow model \cite{dc, dc2}: 
\begin{eqnarray} \label{DC}
\qquad \qquad P_{ij} &=& B_{ij}\left(\theta_i-\theta_j\right)\quad\qquad \forall ij \in\mathcal{E}
\end{eqnarray}
where $P_{ij}$ is the active power flow on line $ij$, and $\theta_i$ denotes the voltage phase angle of bus $i$. $B_{ij}$ is a network constant defined by
$$B_{ij}:=\frac{|V_i||V_j|}{x_{ij}}\cos \left(\theta_i^0-\theta_j^0 \right)$$
where $|V_i|, |V_j|$ are the voltage magnitudes at buses $i$ and $j$ (which are assumed to be constant in the DC model) and $x_{ij}$ is the reactance of line $ij$ (which is assumed to be purely inductive in the DC model). $\theta_i^0$ is the nominal voltage phase angle of bus $i$. See \cite{sm} for a detailed description.

The dynamical model of the power network is
\begin{subequations} \label{dynamic}
	\begin{align}
	M_i \dot{\omega}_i &=- \left( D_i\omega_i+d_i-P_i^{in}+\sum_{j:ij\in \mathcal{E}}P_{ij} -\sum_{k:ki\in \mathcal{E}}P_{ki} \right)  \nonumber
	\\
	& \qquad \qquad\qquad\qquad\qquad\qquad\qquad\quad \forall  i\in \mathcal{G} \label{dynamic:g}\\
	0 & = D_i\omega_i+d_i-P_i^{in}+\sum_{j:ij\in \mathcal{E}}P_{ij}-\sum_{k:ki\in \mathcal{E}}P_{ki} \nonumber
	\\
	& \qquad \qquad\qquad\qquad\qquad\qquad\qquad\quad\forall i\in \mathcal{L} \label{dynamic:l}\\
	\dot{P}_{ij}& =B_{ij}\left(\omega_i-\omega_j\right) \qquad\qquad\qquad\qquad  \forall ij\in \mathcal{E} \label{dynamic:flow}
	\end{align}
\end{subequations}
where $\omega_i$ denotes the frequency, $M_i$ is the generator inertia constant, and $D_i$ is the damping coefficient, at bus $i$. The controllable load at bus $i$ is denoted by $d_i$, and the other uncontrollable power injection (the generation minus uncontrollable frequency-insensitive load) at bus $i$ is denoted by $P_i^{in}$. 

Equations \eqref{dynamic:g} and \eqref{dynamic:l} describe the frequency dynamics at generator buses and load buses, respectively. Actually, they both indicate power balance at every time instant of the dynamics, as illustrated in Figure \ref{dyna}. The damping term $D_i\omega_i=(D_i^g+D_i^l)\, \omega_i$ characterizes the total effect of generator friction and frequency-sensitive loads. The line flow dynamics is delineated by \eqref{dynamic:flow}. The model \eqref{dynamic} essentially assumes that the frequency deviation  is small at every bus. See \cite{sm} for a justification of the model \eqref{dynamic}. 

\begin{figure}[thpb]
	\centering
	\includegraphics[scale=0.62]{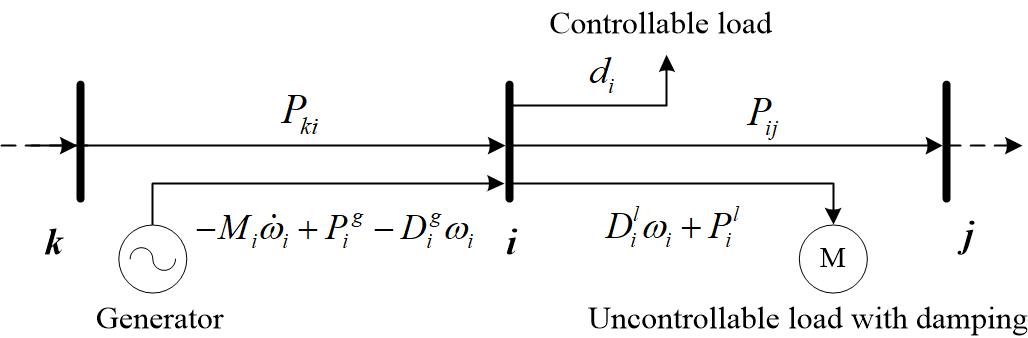}
	\caption{Frequency dynamics at bus $i$, where $P_i^g$ and $P_i^l$ denote generator mechanical power and uncontrollable frequency-insensitive load, respectively; $D^g_i$ and $D^l_i$ denote the damping coefficients of generators and loads, respectively.}
	\label{dyna}
\end{figure}

\begin{remark}The simplified linear model \eqref{dynamic} is employed for the purpose of algorithm design and stability analysis. The ALC algorithm that will be developed later can be applied to power systems with more complex and nonlinear dynamics. In Section \ref{sec:simulations}, a high-fidelity power system simulator  is used to test the ALC algorithm on a realistic dynamical model.
\end{remark}

\subsection{Optimal Load Control Problem}\label{subsec:OLC}

 Given a step change of uncontrollable power injection, i.e. $\bP^{in}:=\left(P_i^{in}\right)_{i\in\mathcal{N}}$, we adjust  controllable loads $\bd:=\left(d_i\right)_{i\in\mathcal{N}}$ for frequency regulation and the control goals are listed as follows:
\begin{enumerate}
\item Restore the system frequency to its nominal value.
\item Rebalance the system power while making each control area absorb its own power change, so that the scheduled tie-line power transfers are restored.
\item Modulate the controllable loads in an economically efficient way that minimizes the total disutility of load adjustment, while satisfying critical operational constraints including load power limits and line thermal limits.
\end{enumerate}

The second and third control goals can be formulated as the following optimal load control (OLC) problem:
\begin{subequations} \label{olc}
\begin{align}
\mathrm{Obj.} \ \, &\min_{\bd,\bth} \quad \sum_{i\in \mathcal{N}}c_i\left(d_i\right) \label{olc:obj}\\
\begin{split}
    \mathrm{s.t.} \ \, &d_i=P^{in}_i-\sum_{j:ij\in \mathcal{E}_{in}}B_{ij}\left(\theta_i-\theta_j\right)\\
    &\qquad\qquad +\sum_{k:ki\in \mathcal{E}_{in}}B_{ki}\left(\theta_k-\theta_j\right) \quad \forall i\in \mathcal{N}
\end{split} \label{olc:balance}\\
	&\underline{d}_i \leq d_i \leq \overline{d}_i  \qquad \qquad \qquad  \qquad \qquad \forall i\in \mathcal{N} \label{olc:power_limits}\\
	& \underline{P}_{ij} \leq B_{ij}\left(\theta_i-\theta_j\right) \leq  \overline{P}_{ij}\qquad \ \ \,\  \quad   \forall ij\in\mathcal{E} \label{olc:thermal_limits}
	\end{align}
\end{subequations}
where $\mathcal{E}_{in}$ denotes the subset of lines that connect buses within the same control area. Constants $\overline{d}_i$ and $\underline{d}_i$ are the upper and lower load power limits at bus $i$, respectively; and $\overline{P}_{ij}$ and $\underline{P}_{ij}$ specify the thermal limits of line $ij$. The function $c_i(d_i)$ quantifies the cost or disutility for load adjustment.

The objective \eqref{olc:obj} is to minimize the total cost of load adjustment. Equation \eqref{olc:balance} guarantees that the power imbalance is eliminated \emph{within} each control area; this can be shown by summing \eqref{olc:balance} over the buses in the same area $\mathcal{A}$, which leads to $\sum_{i\in\mathcal{A}}d_i=\sum_{i\in\mathcal{A}}P^{in}_i$. Equations \eqref{olc:power_limits} and \eqref{olc:thermal_limits} impose the load power constraints and the line thermal constraints, respectively. A load control scheme is considered to be optimal if it leads to a steady-state operating point which is a solution to the OLC problem \eqref{olc}.

 To facilitate the subsequent proof of convergence, we make the following assumptions:

\begin{assumption}\label{assumption:convexity}
	For $i \in \mathcal{N}$, the cost function $c_i(\cdot)$ is strictly convex and continuously differentiable.
\end{assumption}

\begin{assumption}\label{assumption:feasibility}
      The OLC problem \eqref{olc} is feasible.
\end{assumption}

\section{Optimal Automatic Load Control}\label{sec:algorithm}

In this section, a fully distributed ALC scheme (see Algorithm 1) is developed for secondary frequency regulation. 
The basic approach of controller design is \emph{reverse and forward engineering} \cite{the,re1,sm}, which interprets the system dynamics as a primal-dual gradient algorithm to solve a reformulated OLC problem.

\subsection{Reformulated Optimal Load Control Problem}

To explicitly take into account the first control goal in Section \ref{subsec:OLC}, i.e., restoring nominal frequency, the OLC problem \eqref{olc} is reformulated as follows:
\begin{subequations} \label{op}
	\begin{align}
	\mathrm{Obj.} \ \ &\min_{\bd,\bw, \bP, \bpsi} \quad \sum_{i\in \mathcal{N}}c_i\left(d_i\right)+\sum_{i\in \mathcal{N}}\frac{1}{2} D_i \omega_i^2\\
	\begin{split}
    	\mathrm{s.t.} \ \  &d_i=P^{in}_i-D_i\omega_i-\sum_{j:ij\in \mathcal{E}}P_{ij}+\sum_{k:ki\in \mathcal{E}}P_{ki}
    	\\
    	&\qquad\qquad\qquad\qquad\qquad\qquad\qquad\quad\ \, \forall i \in \mathcal{N} \end{split} \label{op:balance}\\
	& \underline{d}_i \leq d_i \leq \overline{d}_i  \qquad\qquad\qquad\qquad\qquad~  \forall i \in \mathcal{N} \label{op:power_limits}\\
	\begin{split}
	& d_i=P^{in}_i-\sum_{j:ij\in \mathcal{E}_{in}}B_{ij}\left(\psi_{i}-\psi_j\right) \\
	&\qquad  \quad \ \ +\sum_{k:ki\in \mathcal{E}_{in}}B_{ki}\left(\psi_k-\psi_i\right) \  \quad\forall i \in \mathcal{N}
	\end{split}\label{op:virtual_balance}\\
	&\underline{P}_{ij} \leq B_{ij}\left(\psi_{i}-\psi_j\right) \leq \overline{P}_{ij}  \qquad\qquad \, \forall ij \in \mathcal{E} \label{op:thermal_limits}
	\end{align}
\end{subequations}
where $\psi_{i}$ is an auxiliary variable interpreted as the virtual phase angle of bus $i$, and  $B_{ij}\left(\psi_{i}-\psi_j\right)$ is the virtual power flow on line $ij$.  Define vectors $\bw:=\left(\omega_i\right)_{i\in\mathcal{N}}$, $\bd:=\left(d_i\right)_{i\in\mathcal{N}}$, $\bP:=\left(P_{ij}\right)_{ij\in\mathcal{E}}$, and $\bpsi:=\left(\psi_i\right)_{i\in\mathcal{N}}$.

In the reformulated OLC problem \eqref{op}, the virtual phase angles $\bpsi$ is introduced to constrain the real power flow. 
See \cite{the} for detailed explanations, where the concepts of virtual phase angle and virtual power flow are first proposed.
Constraints \eqref{op:balance} and \eqref{op:virtual_balance} are introduced so that the primal-dual gradient algorithm solving \eqref{op} is exactly the power network dynamics under proper control.
The equivalence between problems \eqref{olc} and \eqref{op} is established as follows.

\begin{lemma}
	Let $\left(\bw^*,\bd^*,\bP^*,\bpsi^* \right)$ be an optimal solution of problem \eqref{op}. Then $\omega^*_i=0$ for all $i\in\mathcal{N}$, and $\bd^*$ is optimal for problem (\ref{olc}).
\end{lemma}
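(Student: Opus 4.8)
The plan is to show the two halves of the claim separately: first that any optimal solution of \eqref{op} has $\bw^*=\bo$, and second that, given this, $\bd^*$ is feasible and optimal for \eqref{olc}. For the first half, I would argue by contradiction (or by a direct perturbation argument). Suppose $(\bw^*,\bd^*,\bP^*,\bpsi^*)$ is optimal with some $\omega_i^*\neq 0$. The key observation is that the virtual-flow constraints \eqref{op:virtual_balance}--\eqref{op:thermal_limits} together with the load limits \eqref{op:power_limits} do not involve $\bw$ or $\bP$ at all; they only couple $\bd$ and $\bpsi$. Meanwhile, \eqref{op:balance} is the only constraint tying $\bw$ and $\bP$ to $\bd$, and for any fixed feasible $\bd$ one can always choose $\bw=\bo$ and then solve \eqref{op:balance} for a suitable $\bP$ (this solvability follows because the net injection $\sum_i(P_i^{in}-d_i)$ over each connected component is zero — which is exactly what \eqref{op:virtual_balance} forces when summed over a control area — so the flow balance has a solution on the graph). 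Since the term $\sum_i \tfrac12 D_i\omega_i^2$ is strictly decreased by setting $\bw=\bo$ while the cost term $\sum_i c_i(d_i)$ is unchanged, the original point could not have been optimal. Hence $\bw^*=\bo$.

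For the second half, I would substitute $\omega_i^*=0$ back into \eqref{op:balance}, which reduces it to $d_i^* = P_i^{in} - \sum_{j:ij\in\mathcal{E}}P_{ij}^* + \sum_{k:ki\in\mathcal{E}}P_{ki}^*$. Combined with \eqref{op:virtual_balance}, this shows that setting $\theta_i := \psi_i^*$ makes the pair $(\bd^*,\bpsi^*)$ satisfy the power-balance constraint \eqref{olc:balance} of the OLC problem (note \eqref{op:virtual_balance} is literally \eqref{olc:balance} written in the $\bpsi$ variable). The load-limit constraint \eqref{olc:power_limits} is identical to \eqref{op:power_limits}, and the thermal-limit constraint \eqref{olc:thermal_limits} is identical to \eqref{op:thermal_limits} with $\theta=\psi^*$. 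So $(\bd^*,\bpsi^*)$ is feasible for \eqref{olc}, and its objective value $\sum_i c_i(d_i^*)$ equals the $\bd$-part of \eqref{op}'s objective (the $\omega$-part being zero). Conversely, any feasible $(\bd,\bth)$ for \eqref{olc} extends to a feasible point of \eqref{op} by taking $\bw=\bo$, $\bpsi=\bth$, and $\bP$ any solution of the flow balance \eqref{op:balance} (which exists, again by the zero-net-injection condition), with the same objective value. Therefore the two problems have the same optimal value and $\bd^*$ attains it in \eqref{olc}, i.e., $\bd^*$ is optimal for \eqref{olc}.

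The main obstacle I anticipate is the solvability claim for \eqref{op:balance} — i.e., given a demand profile $\bd$ whose area-wise injections balance (as guaranteed by \eqref{op:virtual_balance} or \eqref{olc:balance}), there exists a real flow vector $\bP$ on the (connected, directed) graph $G(\mathcal{N},\mathcal{E})$ satisfying the nodal balance $P_i^{in}-d_i = \sum_{j:ij\in\mathcal{E}}P_{ij}-\sum_{k:ki\in\mathcal{E}}P_{ki}$ for all $i$. This is a standard fact: writing the incidence matrix $C$ of the graph, the system is $C\bP = \bd - \bP^{in}$, which is solvable precisely when $\bd-\bP^{in}$ is orthogonal to $\ker(C^\top)$, i.e. sums to zero over each connected component; here the graph is connected so the single condition $\sum_i(P_i^{in}-d_i)=0$ suffices, and that holds because summing \eqref{op:virtual_balance} over all of $\mathcal{N}$ cancels every virtual-flow term. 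One subtlety worth a sentence: the problem assumes $G$ connected, but control areas partition the buses, so strictly one only needs balance per area — however connectivity of $G$ makes the global condition weaker and it still follows, so either way the flow exists. Modulo this linear-algebra lemma, the rest is bookkeeping: matching up the four constraint families of \eqref{op} against the three of \eqref{olc} and tracking the objective.
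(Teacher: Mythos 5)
Your proposal is correct and follows essentially the same route as the paper: replace $\bw^*$ by $\bo$ while keeping $(\bd^*,\bpsi^*)$ to strictly lower the objective (forcing $\omega_i^*=0$), then identify the $\omega=0$ slice of \eqref{op}, projected onto $(\bd,\bpsi)$, with the feasible set of \eqref{olc} via $\bth=\bpsi$. The only difference lies in the step you flagged as the main obstacle: where you invoke the incidence-matrix solvability criterion (the nodal balance is solvable for $\bP$ because summing \eqref{op:virtual_balance} over $\mathcal{N}$ gives zero net injection and $G$ is connected), the paper sidesteps that lemma by exhibiting the flow explicitly — $P_{ij}=B_{ij}(\psi_i^*-\psi_j^*)$ on lines in $\mathcal{E}_{in}$ and $P_{ij}=0$ on tie lines — which satisfies \eqref{op:balance} with $\bw=\bo$ precisely because it mirrors \eqref{op:virtual_balance}; both arguments are valid.
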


\begin{proof}
Let $\left(\bw^*,\bd^*,\bP^*,\bpsi^* \right)$ be an optimal solution of \eqref{op}, and assume that $\omega^*_i\not=0$ for some $i\in\mathcal{N}$. The optimal objective value of \eqref{op} is therefore:
\begin{eqnarray}\nonumber
f^*&=&\sum_{i\in \mathcal{N}}c_i\left(d_i^*\right)+\sum_{i\in \mathcal{N}}\frac{1}{2}D_i \left(\omega_i^*\right)^2.
\end{eqnarray}
Then consider another solution $\left\{\bw^o,\bd^*,\bP^o,\bpsi^* \right\}$ with $\omega^o_i=0$ for $i\in\mathcal{N}$, $	P_{ij}^o=B_{ij}\left(\psi_i^*-\psi_j^*\right)$ for $ij\in\mathcal{E}_{in}$, and $P_{ij}^o=0$ for $ij\in\mathcal{E}\backslash\mathcal{E}_{in}$. It can be  checked that this solution is feasible for problem (\ref{op}), and its corresponding objective value is
$$f^o=\sum_{i\in \mathcal{N}}c_i\left(d_i^*\right)< f^*$$
which contradicts the optimality of $\left(\bw^*,\bd^*,\bP^*,\bpsi^* \right)$. 
Hence $\omega^*_i=0$ for all $i\in\mathcal{N}$. 

Since constraints \eqref{olc:balance} and \eqref{op:virtual_balance} take the same form, when $\omega_i=0$ and given $(\bd, \bpsi)$, one can always find $\bP$ that satisfies \eqref{op:balance} by taking $	P_{ij}=B_{ij}\left(\psi_i-\psi_j\right)$ for $ij\in\mathcal{E}_{in}$ and $P_{ij}=0$ for $ij\in\mathcal{E}\backslash\mathcal{E}_{in}$.
Therefore the feasible set of \eqref{op} restricted to $\omega_i=0$ and projected onto the $(\bd, \bpsi)$-space is the same as the feasible set of \eqref{olc} on the $(\bd, \bth)$-space. As a result, for any $\left(\bw^*,\bd^*,\bP^*,\bpsi^* \right)$ that is an optimal solution of \eqref{op}, $\bd^*$ is also optimal for \eqref{olc}.
\end{proof}

\subsection{Automatic Load Control Algorithm} \label{sec_olc}

We design a partial primal-dual gradient method to solve the reformulated OLC problem \eqref{op}, so that the solution dynamics can be exactly interpreted as the power network dynamics with load frequency control. Based on this interpretation, the optimal ALC algorithm is developed.

The Lagrangian function of problem \eqref{op} is
\begin{eqnarray}   \label{lagr}
    &&L= \sum_{i\in \mathcal{N}}c_i\left(d_i\right)+\sum_{i\in \mathcal{N}}\frac{1}{2}D_i \omega_i^2 \nonumber \\
    &&+\sum_{i\in \mathcal{N}}\lambda_i\left( -d_i\!+\!P^{in}_i\!-\!D_i\omega_i\!-\!\sum_{j:ij\in \mathcal{E}}P_{ij}\!+\!\sum_{k:ki\in \mathcal{E}}P_{ki} \right) \nonumber\\
    &&+\sum_{i\in \mathcal{N}}\mu_i \left(-d_i+P^{in}_i-\sum_{j:ij\in \mathcal{E}_{in}}B_{ij}\left(\psi_{i}-\psi_j\right)\right. \nonumber \\
    && \left. \qquad\qquad\qquad\qquad\qquad\quad+\sum_{k:ki\in \mathcal{E}_{in}}B_{ki}\left(\psi_k-\psi_i\right)\right) \nonumber \\ 
    &&+\sum_{ij\in \mathcal{E}_{in}} \sigma_{ij}^+\left(B_{ij}\left(\psi_{i}-\psi_j\right)-\overline{P}_{ij}\right) \nonumber \\
    &&+ \sum_{ij\in \mathcal{E}_{in}} \sigma_{ij}^-\left(-B_{ij}\left(\psi_{i}-\psi_j\right)+\underline{P}_{ij}\right) \nonumber \\
    &&+\sum_{i\in \mathcal{N}}\gamma_i^+\left(d_i-\overline{d}_i\right)+\sum_{i\in \mathcal{N}}\gamma_i^-\left(-d_i+\underline{d}_i\right) \label{L}
\end{eqnarray}
where $\lambda_i,\mu_i$ are the dual variables associated with the equality constraints \eqref{op:balance} and \eqref{op:virtual_balance}, and $\gamma_i^+, \gamma_i^-, \sigma_{ij}^+, \sigma_{ij}^- \geq 0$ are the dual variables associated with the inequality constraints \eqref{op:power_limits} and \eqref{op:thermal_limits}. 
Define  $\bw_{\mathcal{G}}:=\left(\omega_i\right)_{i\in\mathcal{G}}$, $\bw_{\mathcal{L}}:=\left(\omega_i\right)_{i\in\mathcal{L}}$, $\bmu:=\left(\mu_i\right)_{i\in\mathcal{N}}$,
$\bsi:=\left(\sigma_{ij}^+,\sigma_{ij}^-\right)_{ij\in\mathcal{E}_{in}}$, and $\bga:=\left(\gamma_{i}^+,\gamma_{i}^-\right)_{i\in\mathcal{N}}$.

Then the partial primal-dual gradient method is given by the following three steps:

\underline{\textit{Step 1)}}: Solve $\min_{\bw} L$ by taking $\frac{\partial L}{\partial \omega_i} =0$ for $i\in\mathcal{N}$, which results in 
\begin{gather} 
\qquad\qquad\qquad\qquad \omega_i = \! \lambda_i \qquad\qquad \qquad \forall i\in \mathcal{N}  \label{sol:w}
\end{gather}
and we obtain
$$\hat L(\bd,\bP,\bpsi, \bla, \bmu, \bsi, \bga):=\min_{\bw} L(\bw, \bd,\bP,\bpsi, \bla, \bmu, \bsi, \bga)$$
Equation (\ref{sol:w}) exhibits the equivalence between $\omega_i$ and $\lambda_i$, hence we substitute $\bw$ for $\bla$ in $\hat{L}$ and other equations for algorithm design.

\underline{\textit{Step 2)}}: Solve $\max_{\bw_{\mathcal{L}}} \hat{L}$ by taking $\frac{\partial {\hat L}}{\partial \omega_i} =0$ for $i\in \mathcal{L}$, which results in
\begin{gather}  
0=\!d_i\!-\!P^{in}_i\!+\!D_i \omega_i\!+\!\sum_{j:ij\in \mathcal{E}}P_{ij}\!-\!\sum_{k:ki\in \mathcal{E}}P_{ki} \ \,\quad \forall   i\in \mathcal{L}  \label{sol:lam}
\end{gather}
and we obtain
$$\overline L(\bd,\bP,\bpsi, \bw_\mathcal{G}, \bmu, \bsi, \bga):=\max_{\bw_\mathcal{L}} \hat L(\bd,\bP,\bpsi, \bw, \bmu, \bsi, \bga) $$

\underline{\textit{Step 3)}}: Apply the the standard primal-dual gradient algorithm on the remaining variables  to find the saddle point of $\overline{L}$, and the solution dynamics is formulated as follows:
\begin{subequations}\label{gra}
    \begin{align}
    	\begin{split}
	\dot{\omega}_i&=\epsilon_{\omega_i}\left( P^{in}_i-d_i-D_i\omega_i-\sum_{j:ij\in \mathcal{E}}P_{ij}  +\sum_{k:ki\in \mathcal{E}}P_{ki} \right)  
	\end{split} \label{gra:lambda}\\
    \dot{P}_{ij}&=\epsilon_{P_{ij}}\left(\omega_i-\omega_j\right)  \label{gra:P}\\
	\dot{d}_i&=\epsilon_{d_i}\left( -c_i^\prime\left(d_i\right)+ \omega_i+\mu_i-\gamma_i^++\gamma_i^-\right) \label{gra:d}\\
   \begin{split}
	\dot{\psi}_{i}&=\epsilon_{\psi_i}\left[
	\sum_{j:ij\in\mathcal{E}_{in}}\left(\mu_i-\mu_j- \sigma_{ij}^++\sigma_{ij}^-\right)B_{ij} \right.\\ &\phantom{=\;\;}\left. 
	\qquad +\sum_{k:ki\in\mathcal{E}_{in}}\left(\mu_i-\mu_k+ \sigma_{ki}^+-\sigma_{ki}^-\right)B_{ki}\right]
	\label{gra:psi} \end{split}\\
   \dot{\gamma}_i^+&=\epsilon_{{\gamma}_i^+}\left[d_i-\overline{d}_i\right]^+_{{\gamma}_i^+}  \label{gra:gamma_plus} \\
    \dot{\gamma}_i^-&=\epsilon_{{\gamma}_i^-}\left[-d_i+\underline{d}_i\right]^+_{{\gamma}_i^-} \label{gra:gamma_minus}\\
 	\begin{split}
	\dot{\mu}_i&=\epsilon_{\mu_i}\left(P^{in}_i -d_i-\sum_{j:ij\in\mathcal{E}_{in}}B_{ij}\left(\psi_{i}-\psi_j\right)\right.\\
	&\phantom{=\;\;}\left. \ \ \qquad\qquad\qquad+\sum_{k:ki\in \mathcal{E}_{in}}B_{ki}\left(\psi_k-\psi_i\right)\right)  
	\end{split} \label{gra:mu}\\
	\dot{\sigma}_{ij}^+&=\epsilon_{\sigma_{ij}^+}\left [B_{ij}\left(\psi_{i}-\psi_j\right)-\overline{P}_{ij}\right]^+_{\sigma_{ij}^+} \label{gra:sigma_plus}\\
	\dot{\sigma}_{ij}^-&=\epsilon_{\sigma_{ij}^-}\left [-B_{ij}\left(\psi_{i}-\psi_j\right)+\underline{P}_{ij}\right]^+_{\sigma_{ij}^-} \label{gra:sigma_minus}
	\end{align}
\end{subequations}
where \eqref{gra:lambda} is for $i\in\mathcal{G}$, \eqref{gra:P} is for $ij\in\mathcal{E}$, \eqref{gra:d}--\eqref{gra:mu} are for $i\in\mathcal{N}$,  and \eqref{gra:sigma_plus}--\eqref{gra:sigma_minus} are for $ij\in\mathcal{E}_{in}$. The notations containing $\epsilon$ represent appropriately selected positive constant step sizes. The operator $[x]^+_y$ means positive projection \cite{pp}, which equals $x$ if either $x>0$ or $y>0$,  and 0 otherwise; thus it ensures $\sigma_{ij}^+, \sigma_{ij}^-, \gamma_i^+, \gamma_i^- \geq 0$. 

\add{Since the instant value of  $P^{in}_i$ is usually unknown and hard to procure in practice}, a new variable $r_i$ defined as follows is introduced to substitute $\mu_i$:
\begin{equation} \label{varsub}
r_i=\left\{
\begin{aligned}
&\frac{K_i}{\epsilon_{\mu_i}}\mu_i-\frac{K_i}{\epsilon_{\omega_i}}\omega_i  &\qquad \forall i\in \mathcal{G}  \\
&\frac{K_i}{\epsilon_{\mu_i}}\mu_i &\qquad \forall i\in \mathcal{L}
\end{aligned}
\right.
\end{equation}
where $K_i$ is a positive constant. In this way, the necessity to know $P^{in}_i$ is circumvented. Define $\br:=(r_i)_{i\in \mathcal{N}}$.

\add{Let $\epsilon_{\omega_i}=1/M_i$ and $\epsilon_{P_{ij}}=B_{ij}$, then equations (\ref{sol:lam}) (\ref{gra:lambda}) (\ref{gra:P}) are exactly the same as the network dynamics (\ref{dynamic}). Thus after the variable substitution, 
the solution dynamics (\ref{sol:lam})-(\ref{gra})  is equivalent to the ALC algorithm \eqref{cm} together with the network dynamics (\ref{dynamic}). 
This key property attributes to the deliberate design of the reformulated OLC problem (4) and the partial primal-dual gradient method. As a result, the local load controller only needs to execute the ALC algorithm (\ref{cm}), while the network dynamics (\ref{dynamic}) is the natural evolution of  the physical power system in response to the load adjustment. In this way, a portion of the solution dynamics, i.e., equations (\ref{sol:lam}) (\ref{gra:lambda}) (\ref{gra:P}), or (\ref{dynamic}), is outsourced to the power network physics, and the ALC algorithm just needs to take  measurement of the local frequency and power flow from the physical system. 
The whole design procedure for the distributed load controller is illustrated in Figure \ref{desro}.
}

\begin{figure}[thpb]
	\centering
	\includegraphics[scale=0.27]{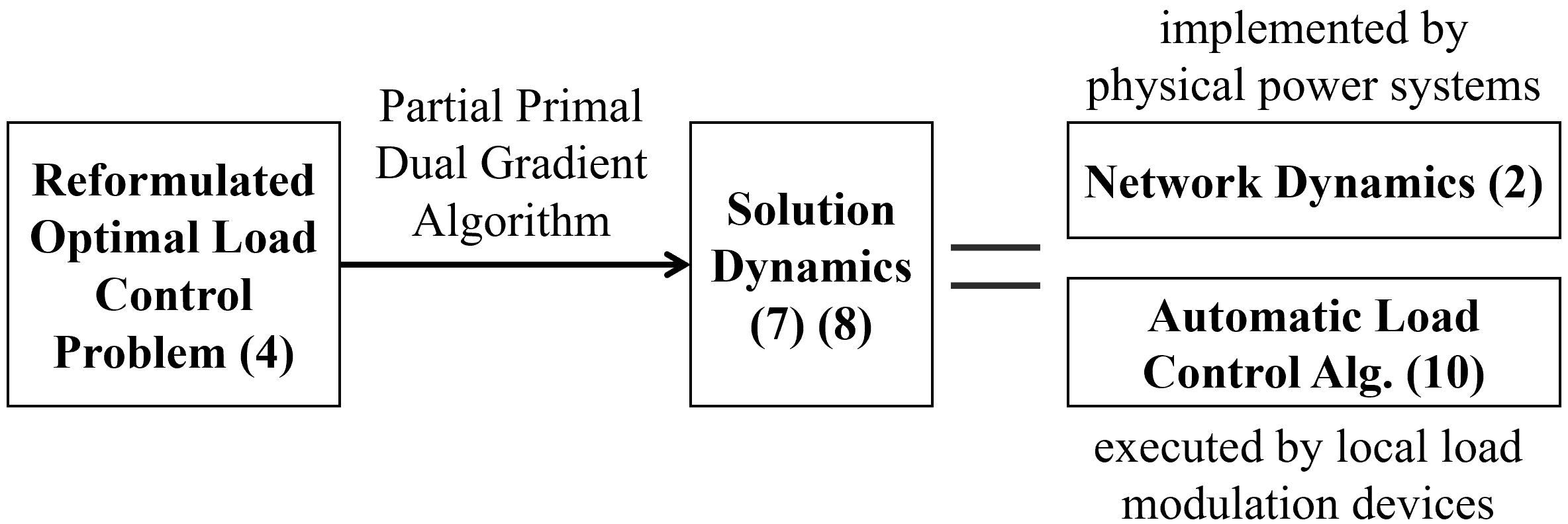}
	\caption{The design procedure for distributed automatic load controller.}
	\label{desro}
\end{figure}

\begin{algorithm}[htb] 
	\caption{Automatic Load Control Algorithm.} 
	\begin{algorithmic}
		\STATE	\begin{subequations} \label{cm}
			\begin{align} 
			\dot{d}_i&=\epsilon_{d_i}\left( -c_i^\prime\left(d_i\right)+\eta_i\omega_i+\frac{\epsilon_{\mu_i}}{K_i}r_i-\gamma_i^++\gamma_i^-\right) \label{cm:d}\\
			\begin{split}
			\dot{\psi}_{i}&=\epsilon_{\psi_i}\left[
			\sum_{j:ij\in\mathcal{E}_{in}}\left(\mu_i-\mu_j- \sigma_{ij}^++\sigma_{ij}^-\right)B_{ij} \right.\\
			&\phantom{=\;\;}\left. \quad\qquad+
			\sum_{k:ki\in\mathcal{E}_{in}}\left(\mu_i-\mu_k+ \sigma_{ki}^+-\sigma_{ki}^-\right)B_{ki}\right]
			\end{split} \label{cm:psi}\\
			\dot{\gamma}_i^+&=\epsilon_{{\gamma}_i^+}\left[d_i-\overline{d}_i\right]^+_{{\gamma}_i^+}\\
			\dot{\gamma}_i^-&=\epsilon_{{\gamma}_i^+}\left[-d_i+\underline{d}_i\right]^+_{{\gamma}_i^+}\\
			\begin{split}
			\dot{r}_i&=K_i\left[D_i\omega_i+\sum_{j:ij\in\mathcal{E}}P_{ij}-\sum_{k:ki\in\mathcal{E}}P_{ki}\right.\\
			&\phantom{=\;\;}\left.-\sum_{j:ij\in\mathcal{E}_{in}}B_{ij}\left(\psi_{i}-\psi_j\right)+\sum_{k:ki\in \mathcal{E}_{in}}B_{ki}\left(\psi_k-\psi_i\right)\right]
			\end{split} \label{cm:r}\\
			\dot{\sigma}_{ij}^+&=\epsilon_{\sigma_{ij}^+}\left [B_{ij}\left(\psi_{i}-\psi_j\right)-\overline{P}_{ij}\right]^+_{\sigma_{ij}^+}\\
			\dot{\sigma}_{ij}^-&=\epsilon_{\sigma_{ij}^-}\left [-B_{ij}\left(\psi_{i}-\psi_j\right)+\underline{P}_{ij}\right]^+_{\sigma_{ij}^-}
			\end{align}
		\end{subequations}	
	\end{algorithmic} 
\end{algorithm}

\begin{figure}[thpb]
	\centering
	\includegraphics[scale=0.27]{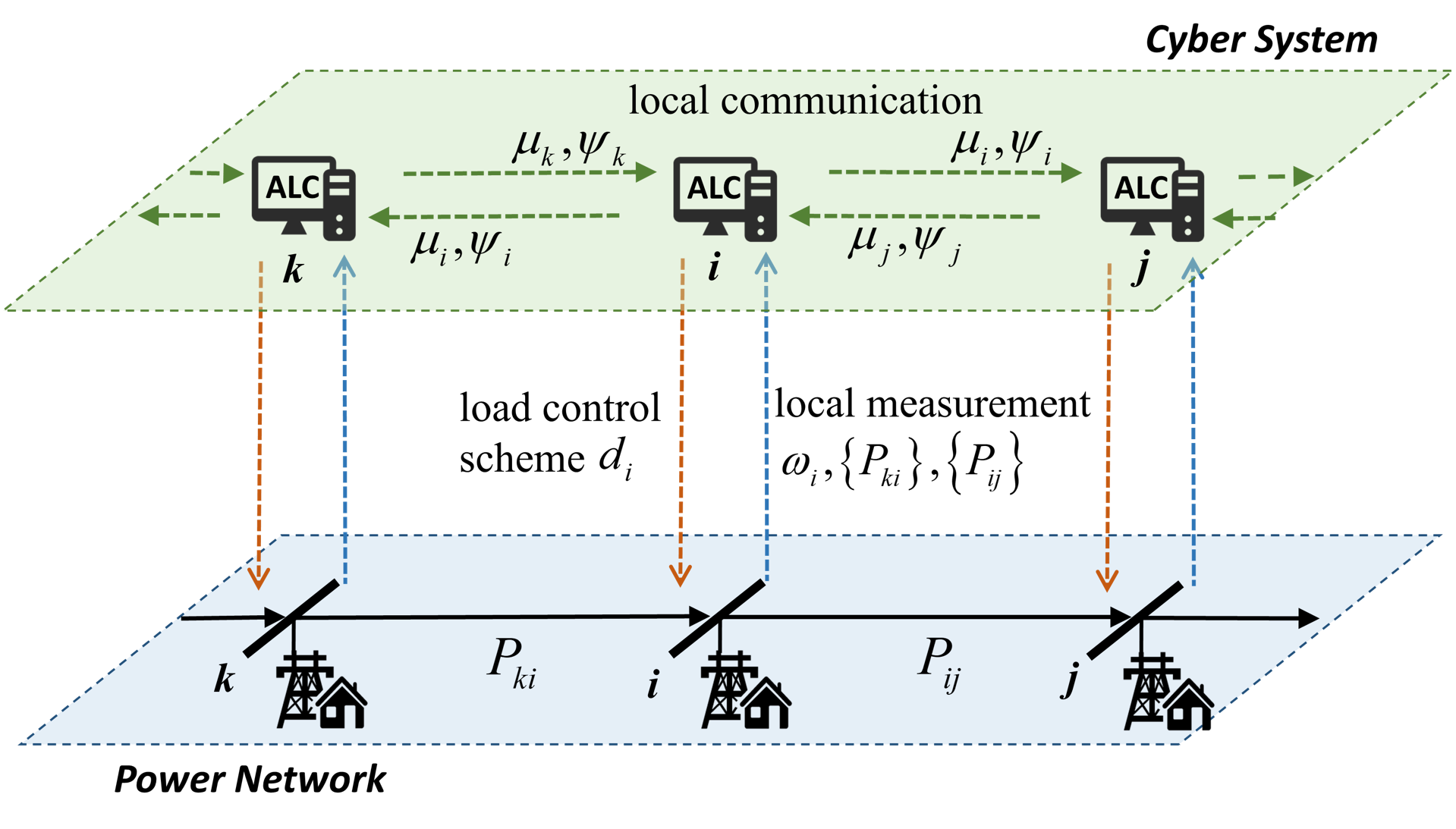}
	\caption{The automatic load control (ALC) mechanism.}
	\label{alg_l}
\end{figure}

In \eqref{cm:d}, $\eta_i$ is set as $({\epsilon_{\omega_i}+\epsilon_{\mu_i}})/{\epsilon_{\omega_i}}$ for $i\in \mathcal{G}$ and 1 for $i\in\mathcal{L}$ respectively. In \eqref{cm:psi}, $\mu_i$ is the abbreviation of the expression (\ref{mudef})
\begin{equation} \label{mudef}
\mu_i=\left\{
\begin{aligned}
&\frac{\epsilon_{\mu_i}}{\epsilon_{\omega_i}}\omega_i+\frac{\epsilon_{\mu_i}}{K_i} r_i  &\qquad \forall i\in \mathcal{G}  \\
&\frac{\epsilon_{\mu_i}}{K_i} r_i &\qquad \forall i\in \mathcal{L}
\end{aligned}
\right.
\end{equation}

The implementation of algorithm (\ref{cm}) is illustrated in Figure \ref{alg_l}. In the physical (lower) layer, each bus $i$ measures its own frequency deviation $\omega_i$ and the power flows $(P_{ki}, P_{ij})$ on its adjacent lines. In the cyber (upper) layer, each bus $i$ exchanges the information $(\mu_i,\psi_i)$ with its neighboring buses in the same control area. Then following algorithm \eqref{cm}, each bus $i$ updates the variables $(\psi_{i},\gamma_{i}, \sigma_{ij}, r_i)$ and computes its load adjustment $d_i$. Next, the control command $d_i$ is sent back to the physical layer and executed by the load modulation device. Afterwards, the system frequency and power flows respond to the load adjustment according to the physical law \eqref{dynamic}. In this manner, the combination of network dynamics (\ref{dynamic}) and the proposed control algorithm (\ref{cm}) forms a closed loop. Since only local measurement and local communication are required in this process,  the proposed ALC algorithm (\ref{cm}) is performed in a fully distributed manner.

\add{
\begin{remark}
Although the ALC algorithm (\ref{cm}) is developed based on step power changes, it is capable of handling continuous power disturbance. Because in practical implementation, the real-time measurements of frequency deviation and power flow are utilized to generate the load adjustment decisions, which renders the immediate  response to the time-varying power disturbance. The dynamical tracking performance of the ALC algorithm is  analyzed in Section \ref{sec-dyna}, and case studies on continuous power change are provided in Section \ref{ca-con}.
\end{remark}
}

\subsection{Asymptotic Convergence and Main Advantages} \label{sec-asymptotic}

In this part, we show that the proposed algorithm \eqref{cm} will converge to a steady-state operating point that is an optimal solution of the reformulated OLC problem \eqref{op}. This claim is restated formally as the following theorem.

\begin{theorem} \label{thm1}
Under  Assumption \ref{assumption:convexity} and \ref{assumption:feasibility}, the ALC algorithm \eqref{cm} together with the network dynamics \eqref{dynamic} globally asymptotically converges to a point $\left(\bd^*, \bw^*, \bP^*,  \bpsi^*, \bga^*, \br^*,\bsi^*\right)$, where $\left(\bd^*, \bw^*, \bP^*,  \bpsi^*\right)$ is an optimal solution of problem \eqref{op}.
\end{theorem}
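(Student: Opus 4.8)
The plan is to exploit the interpretation, established in the preceding subsection, that the closed-loop system consisting of the network dynamics \eqref{dynamic} and the ALC algorithm \eqref{cm} is (after the variable substitution \eqref{varsub} with $\epsilon_{\omega_i}=1/M_i$, $\epsilon_{P_{ij}}=B_{ij}$) exactly the partial primal-dual gradient flow \eqref{sol:lam}--\eqref{gra} for the reformulated OLC problem \eqref{op}. So it suffices to study the convergence of \eqref{gra} in the original $(\bw,\bd,\bP,\bpsi,\bmu,\bga,\bsi)$ coordinates and then translate the limit back through \eqref{sol:w} and \eqref{varsub}. First I would record that, under Assumption \ref{assumption:convexity}, the partial minimization over $\bw$ in Step 1) and the partial maximization over $\bw_\mathcal{L}$ in Step 2) are exact (the relevant Hessian blocks are $\mathrm{diag}(D_i)\succ 0$ for the min and negative definite for the $\bw_\mathcal{L}$ max, since those variables enter $L$ quadratically with the right sign), so the reduced Lagrangian $\overline L$ is well defined, convex in the remaining primal variables $(\bd,\bP,\bpsi)$ and concave (in fact linear) in the duals $(\bw_\mathcal{G},\bmu,\bga,\bsi)$. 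Under Assumption \ref{assumption:feasibility}, and because the constraints of \eqref{op} are affine, strong duality holds and a saddle point $(\bd^*,\bP^*,\bpsi^*,\bw_\mathcal{G}^*,\bmu^*,\bga^*,\bsi^*)$ of $\overline L$ exists; the KKT conditions identify any such saddle point (together with $\bw_\mathcal{L}^*$ recovered from Step 2) and $\bw^*=\bla^*=[\,\bw_\mathcal{G}^*;\bw_\mathcal{L}^*\,]$ via \eqref{sol:w}) with an optimal solution of \eqref{op}, which by Lemma 1 gives the claimed optimality statement.

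Next I would establish convergence of the flow \eqref{gra} to the saddle-point set via a Lyapunov argument. The natural candidate is the weighted distance to an equilibrium,
\begin{equation*}
V=\tfrac12\!\!\sum_{i\in\mathcal{G}}\!\tfrac{1}{\epsilon_{\omega_i}}(\omega_i-\omega_i^*)^2+\tfrac12\!\!\sum_{ij\in\mathcal{E}}\!\tfrac{1}{\epsilon_{P_{ij}}}(P_{ij}-P_{ij}^*)^2+\tfrac12\!\!\sum_{i\in\mathcal{N}}\!\tfrac{1}{\epsilon_{d_i}}(d_i-d_i^*)^2+\tfrac12\!\!\sum_{i\in\mathcal{N}}\!\tfrac{1}{\epsilon_{\psi_i}}(\psi_i-\psi_i^*)^2+\tfrac12\!\!\sum_{i\in\mathcal{N}}\!\tfrac{1}{\epsilon_{\mu_i}}(\mu_i-\mu_i^*)^2+\tfrac12\!\!\sum_{i}\!\tfrac{1}{\epsilon_{\gamma_i^\pm}}(\gamma_i^\pm-\gamma_i^{\pm*})^2+\tfrac12\!\!\sum_{ij}\!\tfrac{1}{\epsilon_{\sigma_{ij}^\pm}}(\sigma_{ij}^\pm-\sigma_{ij}^{\pm*})^2 .
\end{equation*}
Differentiating along \eqref{gra} and using the standard primal-dual/saddle-point inequality — convexity-concavity of $\overline L$ plus the variational characterization of the positive-projection operator $[\,\cdot\,]^+_y$ (see \cite{pp}), which only improves the estimate for the inequality-multiplier terms — yields $\dot V\le \overline L(\bd,\bP,\bpsi,\bw_\mathcal{G}^*,\bmu^*,\bga^*,\bsi^*)-\overline L(\bd^*,\bP^*,\bpsi^*,\bw_\mathcal{G},\bmu,\bga,\bsi)\le 0$, so $V$ is nonincreasing, all trajectories are bounded, and an equilibrium exists and is stable. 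To upgrade boundedness to convergence I would invoke LaSalle's invariance principle: on the largest invariant set where $\dot V=0$ one must have, by strict convexity of each $c_i$ (Assumption \ref{assumption:convexity}), that $\bd\equiv\bd^*$, and then feeding this back into the dynamics \eqref{gra} successively pins down $\bw$, $\bP$, the virtual flows $B_{ij}(\psi_i-\psi_j)$, and the active multipliers, forcing the invariant set to consist only of equilibria; since along it $V$ is constant and $V$ measures distance to the particular equilibrium $(\bd^*,\dots)$ we chose, the trajectory converges to a single such point. Translating back through \eqref{sol:w} and \eqref{varsub} (both smooth bijections in the relevant variables) gives global asymptotic convergence of \eqref{cm}+\eqref{dynamic} to $(\bd^*,\bw^*,\bP^*,\bpsi^*,\bga^*,\br^*,\bsi^*)$ with $(\bd^*,\bw^*,\bP^*,\bpsi^*)$ optimal for \eqref{op}.

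The main obstacle I anticipate is the LaSalle step, not the $\dot V\le 0$ computation. The sticking point is that $\overline L$ is only linear (not strictly concave) in all the dual variables and only strictly convex in $\bd$ (merely convex, indeed linear, in $\bP$ and $\bpsi$), so the set $\{\dot V=0\}$ is a priori much larger than the equilibrium set; one has to argue carefully, using the specific algebraic structure of \eqref{gra} and the substitution that ties $\bla$ to $\bw$ and $\mu_i$ to $\omega_i,r_i$ via \eqref{mudef}, that no nontrivial trajectory can stay in this set. Subtleties include: the $\bpsi$- and $\bP$-components are determined only up to the kernels of the (virtual and full) incidence-weighted Laplacians, so convergence of the flows rather than the raw angles is what one proves, which is exactly what the constraints of \eqref{op} and \eqref{olc} care about; and on the generator/load partition one must separately handle that $\omega_i$ for $i\in\mathcal{L}$ is an algebraic (not differential) variable solved from \eqref{sol:lam}, so its convergence is inherited from that of the differential variables. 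I would also need the non-degeneracy coming from connectivity of $G(\mathcal{N},\mathcal{E})$ (and of each control-area subgraph) to conclude that frequency synchronization $\omega_i\to 0$ propagates across the whole network. Modulo these structural bookkeeping arguments — which parallel those in \cite{the,sm,re1} — the Lyapunov/LaSalle scheme delivers the theorem.
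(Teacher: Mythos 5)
Your proposal is correct and follows essentially the same route as the paper: interpret the closed loop \eqref{dynamic}+\eqref{cm} as the partial primal-dual gradient flow (\ref{sol:lam}), \eqref{gra}, identify its equilibria with saddle points (hence primal-dual optima) of \eqref{op} via strong duality under Assumptions \ref{assumption:convexity}--\ref{assumption:feasibility}, and conclude global asymptotic convergence of the projected primal-dual dynamics. The only difference is one of detail: the paper delegates your entire Lyapunov/LaSalle step to the cited results \cite{pp,pf1} (and the equilibrium--saddle-point identification to \cite{re1}), whereas you sketch that argument explicitly; the subtleties you flag (linearity in the duals, projection discontinuities, non-uniqueness of $\bP,\bpsi$) are precisely what those references resolve.
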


\begin{proof} 
Since the closed-loop system dynamics \eqref{dynamic}, \eqref{cm} are equivalent to the solution dynamics  (\ref{sol:lam}), \eqref{gra}, we prove the convergence of dynamics (\ref{sol:lam}), \eqref{gra}  to an optimal solution of problem \eqref{op} instead. 

Define $\by:=[\bd; \bP; \bpsi; \bw_\mathcal{G};\bmu; \bsi;\bga]$ and let $\by^*$ be any equilibrium point of dynamics (\ref{gra}), which makes the right-hand-side of \eqref{gra} zero. 
Let $ \bw_\mathcal{L}^*$ be the solution of (\ref{sol:lam}) given $\by^*$. By Assumptions \ref{assumption:convexity} and \ref{assumption:feasibility}, strong duality holds for the problem \eqref{op}. Thus, according to \cite[Proposition 9]{re1}, $(\by^*,\bw_\mathcal{L}^*,\bla^*)$ with $\bla^*=\bw^*$ (\ref{sol:w}) is a saddle point of the Lagrangian $L$ (\ref{lagr}) and is primal-dual optimal for \eqref{op} \cite{boyd2004convex}.

Then we just need to prove that dynamics \eqref{gra} asymptotically converges to its equilibrium point $\by^*$. Since dynamics \eqref{gra} is obtained by applying the standard primal-dual gradient algorithm to solve the saddle point problem \eqref{nl}, i.e., 
\textit{Step 3)} in Section \ref{sec:algorithm}-B,
\begin{eqnarray} \label{nl}
\min_{\bd,\bP,\bpsi} \ \max_{\bw_\mathcal{G}, \bmu, \bsi\geq \bm{0}, \bga\geq \bm{0}} \overline{L}(\bd,\bP,\bpsi,\bw_\mathcal{G}, \bmu, \bsi, \bga)
\end{eqnarray}
the asymptotic convergence proof of dynamics \eqref{gra}  directly follows the results in \cite{pp,pf1}. Thus Theorem \ref{thm1} is proved.
\end{proof}

One challenge in implementing the ALC algorithm (\ref{cm}) is that 
 the damping coefficient $D_i$ is in general hard to know exactly. For this issue, we provide Theorem \ref{thm-robustD} in Appendix \ref{pf-robustD} to show 
  that the proposed load controller  is robust to the inaccuracy in $D_i$, in the sense that the ALC dynamics still converge to an optimal solution of the OLC problem, if the inaccuracy in $D_i$ is small and some additional conditions are satisfied.

\add{ 
\begin{remark} \label{diff}
Comparing with the load control scheme in reference \cite{the} (most related work to this paper), the key advantages of the proposed ALC algorithm (\ref{cm}) are 

1) (\textbf{Sensing Requirement}) To implement the load control scheme in \cite{the}, each bus  requires the value of the instant power change $P^{in}_i$  or the estimation of the angular acceleration $\dot{w}_i$, while their accurate values are hard to obtain in real-time application, especially for the aggregate bus with many generators and loads attached. In contrast, using a different design procedure, the proposed ALC algorithm (\ref{cm}) completely circumvents the information of $P^{in}_i$, and only the local measurements of $(\omega_i, P_{ki}, P_{ij})$ are required for each bus.

  2) (\textbf{Communication Requirement}) With the load control scheme in \cite{the}, each boundary bus needs to communicate with all the  other boundary buses within the same control area, 
  which may carry heavy remote communication burden, especially when two boundary buses are far away from each other; in addition, each boundary bus has to exchange information with its adjacent buses located in other control areas, which may violate the information privacy. 
  In contrast, using the ALC algorithm 
  (\ref{cm}), each bus 
  (no matter on boundary or not) only needs to communicate with its adjacent buses within the same control area, i.e., no information exchange among different control areas.

  Therefore, the sensing and communication requirements are greatly alleviated with the proposed ALC algorithm (\ref{cm}), which  renders a fully distributed control mechanism, while the 
  global asymptotical convergence can still be achieved.
\end{remark}
}

\add{
\subsection{Further Discussion}

In this paper, renewable generations are modelled as non-dispatchable power injection and captured by $\bP^{in}$. 
Actually, the proposed control algorithm that determines local load adjustment in real time can be applied to controlling the dispatchable renewable generation as well, without considering the inverter dynamics.
This setting is generally acceptable for practical application since the inverter dynamics  is much faster than the timescale of secondary frequency regulation. However, as the penetration of renewable generation deepens, 
the impacts of inverter dynamics and harmonics become more and more significant, therefore it is necessary  to model the internal dynamics of renewable sources in a realistic way. One of the future work is to design distributed inverter controller for 
 renewable energy sources  to provide frequency regulation and mitigate harmonics.

Besides, we make Assumption \ref{assumption:feasibility} to assume that each control area has sufficient controllable load/generation resources to absorb its own power change. Once a control area does not have enough controllable resources to eliminate the power imbalance, the OLC problem (\ref{olc}) becomes infeasible. In this situation, 
 the proposed load controller (\ref{cm}) can still work to exploit the limited resources to alleviate the frequency deviation, but the nominal frequency can not be restored. Hence, when the system operators suspect that a control area can not absorb the power change, they need to either 1) dispatch available load/generation resources from the neighbor control areas (i.e., relax the tie-line requirement), or 2) call upon more controllable resources, e.g., renewable generation or energy storage, for frequency regulation. For scheme 1), our proposed algorithm is easy to adjust to this situation by just modifying the set $\mathcal{E}_{in}$, then two or more control areas can be combined and share all the controllable resources. For scheme 2), as mentioned before, the proposed load control mechanism can be adapted to control the inverter-based renewable generations.

}

\section{Exponential Convergence, Dynamical Tracking and Robustness Analysis} \label{sec:dynamic}

This section studies the global exponential convergence of the ALC algorithm and analyzes its 
 dynamical tracking performance and robustness.

To facilitate theoretical analysis, we consider a system with sufficient capacities so that inequality constraints
(\ref{olc:power_limits}, \ref{olc:thermal_limits}) in the OLC problem (\ref{olc}) can be ignored, i.e., (\ref{op:power_limits}, \ref{op:thermal_limits}) in problem (\ref{op}). Then the reformulated OLC problem (\ref{op}) reduces to 
\begin{subequations} \label{op2}
	\begin{align}
	\mathrm{Obj.} \ \ &\min_{\bd,\bw, \bP, \bpsi} \quad c\left(\bd\right) + \frac{1}{2} \bw^\top D \bw \label{op2:obj}\\
	\mathrm{s.t.} \ \  &\bd=\bP^{in}-D\bw-A\bP \\
	& \bd=\bP^{in}- \bar{A}\bar B \bar A^\top \bpsi \label{op2:Y}
	\end{align}
\end{subequations}
where $c(\bd):=\sum_{i\in \mathcal{N}}c_i\left(d_i\right)$ and $D:=\text{diag}(D_i)_{i\in\mathcal{N}}$. $A$ is the node-branch incidence matrix with respect to the buses $i\in\mathcal{N}$ and the lines $ij\in\mathcal{E}$.  $\bar A$ is a sub-matrix of $A$, which is obtained by removing the columns associated with the boundary lines ($ij \in \mathcal{E} \backslash \mathcal{E}_{in}$) in $A$, and $\bar B := \text{diag} (B_{ij} )_{ij\in\mathcal{E}_{in}}$. 

Without loss of generality, we arrange the sequence of buses in vectors (matrices) so that  $\bP^{in} = [{\bP^{in}_\mathcal{G}}; {\bP^{in}_\mathcal{L}}]$ , $\bd = [\bd_\mathcal{G}; \bd_\mathcal{L}]$, $\bw = [\bw_\mathcal{G}; \bw_\mathcal{L}]$, $A = [A_\mathcal{G}; A_\mathcal{L}]$, and 
$D = \text{blockdiag}(D_\mathcal{G},D_\mathcal{L})$. 
Following the same solution procedure in Section \ref{sec:algorithm}-B, the ALC dynamics  (\ref{gra}) become
\begin{subequations}\label{gra2}
	\begin{align}
	\bm{0}&=\!\bd_\mathcal{L}\!-\!\bP^{in}_\mathcal{L}\!+\!D_\mathcal{L}\bw_\mathcal{L}\!+ A_\mathcal{L} \bP  \label{sol2:lam2} \\
		\dot{\bd}&=\Xi_{d}\cdot\left( -\nabla c(\bd)+ \bw+\bmu\right) \label{gra:d2}\\
			\dot{\bP}&= \Xi_{P} \cdot A^\top \bw           \label{gra:P2}\\
		\dot{\bpsi}&=\Xi_{\psi}\cdot     S \bmu          	\label{gra:psi2} \\
	\dot{\bw}_\mathcal{G}&= \Xi_{\omega}\cdot\left( -\bd_\mathcal{G}-D_\mathcal{G}\bw_\mathcal{G}-A_\mathcal{G} \bP + \bP^{in}_\mathcal{G}\right)  \label{gra:lambda2}\\
	\dot{\bmu}&=\Xi_{\mu}\cdot \left(  -\bd- S \bpsi    +\bP^{in}    \right)     \label{gra:mu2}
	\end{align}
\end{subequations}
where 
$S:= \bar{A}\bar B \bar A^\top$
and $\nabla c(\bd): = \left( c_i^\prime(d_i)  \right)_{i\in \mathcal{N}}$. Since the cost function $c(\bd)$ is a general convex function, it is noted that  (\ref{gra2}) is a \emph{nonlinear} dynamical system.



\subsection{Global Exponential Convergence Analysis}

The asymptotic convergence of the ALC algorithm has been 
exhibited in Theorem \ref{thm1}, while this part focuses on a stronger and highly desired property: global exponential convergence. To establish this, we firstly make Assumption \ref{ass:strong} for the cost function $c(\bd)$.




\begin{assumption} \label{ass:strong}
	For $i \in \mathcal{N}$, the cost function $c_i(\cdot)$ is  twice differentiable, $u$-strongly convex and $\ell$-smooth with $0<u\leq \ell$, i.e., $u\leq c_i''(d_i)\leq \ell$ for any $d_i$.
\end{assumption}

Let $\bz := \left[\bd; \bP;\bpsi;\bw_{\mathcal{G}}; \bmu \right]$ and $\bx:=\left[\bz; \bw_\mathcal{L}\right] $ be the system state.
Let  $\bx^* := [\bz^*;\bw_\mathcal{L}^*]$
be one of the equilibrium points of the ALC dynamics (\ref{gra2}). Define the equilibrium set $\mathcal{S}$ as (\ref{eq:S}):
    \begin{align} \label{eq:S}
    \begin{split}
        &\mathcal{S} : =  \left\{\bx\,|\,    {\bd} = \bd^*, {\bw}_\mathcal{G} = {\bw}_\mathcal{G}^*, {\bw}_\mathcal{L} = \bw_\mathcal{L}^*, \right.\\
	&\qquad\qquad\quad   \phantom{=\;\;}\left. {\bmu} =\bmu^*, A {\bP} = A\bP^*, S{\bpsi} = S \bpsi^*\right\}.
    \end{split}
    \end{align}
It can be checked that any point $\hat{\bx}\in \mathcal{S}$ is an equilibrium point of the ALC dynamics (\ref{gra2}), and thus the corresponding $(\hat{\bd}, \hat{\bw}, \hat{\bP},  \hat{\bpsi})$ is  an optimal solution of problem \eqref{op2} \cite{re1}.
Let $$\text{dist}(\bx,\mathcal{S}):=\inf_{\hat{\bx}\in\mathcal{S}}||\bx-\hat{\bx}||$$ denote the distance between   a point $\bx$ and the set  $\mathcal{S}$.
Then we have the following theorem:
\begin{theorem}\label{thm3}
    Under Assumption \ref{assumption:feasibility} and \ref{ass:strong}, the ALC dynamics (\ref{gra2})
    globally exponentially converge to the equilibrium set  $\mathcal{S}$ (\ref{eq:S}), 
    in the sense that there exist constants $C_0\geq 0$ and $\rho_0>0$ such that the distance between  $\bx(t)$ and $\mathcal{S}$ satisfies 
    \begin{align}
      \textnormal{dist}(\bx(t), \mathcal{S}) \leq C_0\cdot e^{-\rho_0 t}, \quad \forall t\geq 0.
    \end{align}
\end{theorem}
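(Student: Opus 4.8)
The plan is to prove exponential convergence by constructing an explicit quadratic Lyapunov function with \emph{non-zero off-diagonal terms} (as the excerpt hints: the function labelled \eqref{lya}) and showing its derivative along trajectories of \eqref{gra2} is negative definite relative to the equilibrium set $\mathcal{S}$. First I would eliminate the algebraic variable $\bw_\mathcal{L}$: equation \eqref{sol2:lam2} expresses $\bw_\mathcal{L}$ as an affine function of $\bd_\mathcal{L}$, $\bP$ (hence of $\bz$), so the DAE \eqref{gra2} reduces to an ODE $\dot{\bz}=F(\bz)$ on the $\bz$-coordinates; the distance $\textnormal{dist}(\bx,\mathcal{S})$ is then equivalent (up to constants, using Lipschitz dependence of $\bw_\mathcal{L}$ on $\bz$) to a suitable seminorm of $\bz-\hat{\bz}$. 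Next, because $A$ and $S$ have nontrivial kernels, the natural ``state'' is not $\bz$ itself but its image under the projections appearing in \eqref{eq:S}; I would work with the error coordinates $\tilde{\bd}:=\bd-\bd^*$, $\tilde{\bw}_\mathcal{G}$, $\tilde{\bmu}$, $A\tilde{\bP}$, $S\tilde{\bpsi}$, noting these are the only combinations that must vanish on $\mathcal{S}$, and check that $F$ depends on $\bz$ only through them plus $\bw_\mathcal{L}$.

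The core step is the Lyapunov construction. A pure sum-of-squares candidate $V=\tfrac12\|\bz-\hat{\bz}\|_{\Xi^{-1}}^2$ only yields $\dot V\le 0$ (the standard primal–dual argument behind Theorem \ref{thm1}), because the skew-symmetric coupling between primal and dual blocks contributes nothing to $\dot V$ and the strong convexity of $c$ alone gives negativity only in the $\tilde{\bd}$ direction. To get a \emph{strict} decrease in all directions I would add cross terms that couple, e.g., $\bd$ with $\bmu$, $\bw_\mathcal{G}$ with $\bmu$, and $\bP$/$\bpsi$ with the other blocks, in the spirit of Lyapunov functions for primal–dual dynamics with strongly convex objectives (cf.\ the line of work on exponential stability of saddle-point flows). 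Concretely the candidate has the block form
\begin{align*}
V(\bz) = \frac12 (\bz-\hat{\bz})^\top P (\bz - \hat{\bz}),
\end{align*}
where $P\succ 0$ is a constant symmetric matrix whose diagonal blocks are the inverse step-size matrices $\Xi_d^{-1},\Xi_P^{-1},\dots$ and whose off-diagonal blocks are small multiples $\varepsilon$ of coupling matrices chosen so that $\dot V$ picks up negative quadratic terms in $\tilde{\bw}_\mathcal{G}$, $\tilde{\bmu}$, $A\tilde{\bP}$ and $S\tilde{\bpsi}$; I would then invoke $u\le c_i''\le\ell$ (Assumption \ref{ass:strong}) to control the cross terms arising from $\nabla c(\bd)-\nabla c(\bd^*)$. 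For $\varepsilon$ sufficiently small, $P$ stays positive definite and $\dot V \le -2\rho_0 V$ for some $\rho_0>0$ on the quotient by $\ker$, by a Schur-complement / small-gain estimate. The main obstacle — and the place I expect the real work to be — is precisely choosing the off-diagonal blocks and proving the resulting matrix inequality $\dot V \le -2\rho_0 V$: one must handle the kernels of $A$ and $S$ (the Lyapunov function is only positive definite \emph{modulo} these kernels, so $V$ itself need not decrease unless one restricts to the orthogonal complement of the equilibrium manifold, which requires showing the dynamics leave the relevant kernel components invariant or harmless), and one must verify that the indefinite terms coming from the incidence-matrix couplings and from the mean-value form of $\nabla c$ are dominated. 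This is a finite-dimensional but delicate LMI-type argument.

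Once $\dot V \le -2\rho_0 V$ is established, the conclusion is routine: Grönwall gives $V(\bz(t))\le V(\bz(0))e^{-2\rho_0 t}$; since $V$ is equivalent to $\|\bz-\hat{\bz}\|^2$ on the complement of the equilibrium directions (so $V(\bz(t))\ge c_1\,\textnormal{dist}(\bz(t),\mathcal{S}_\bz)^2$ for some $c_1>0$, where $\mathcal{S}_\bz$ is the projection of $\mathcal{S}$), and since $\bw_\mathcal{L}$ is a Lipschitz function of $\bz$ via \eqref{sol2:lam2}, we get $\textnormal{dist}(\bx(t),\mathcal{S})\le C_0 e^{-\rho_0 t}$ with $C_0$ depending on $V(\bz(0))$, i.e.\ on the initial state, and on the constants $c_1$, the Lipschitz constant, and $\|P\|$. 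One technical point to dispatch carefully is that $\hat{\bz}$ (the nearest equilibrium) may vary with $t$; the clean way around this is to fix one equilibrium $\bz^*$, run the argument with $\hat{\bz}=\bz^*$, and then observe that since the estimate holds for every equilibrium the bound transfers to $\textnormal{dist}(\cdot,\mathcal{S})$, or alternatively to show directly that $V$ restricted to error coordinates only sees the distance to $\mathcal{S}$.
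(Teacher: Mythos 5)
Your strategy is the same as the paper's: eliminate $\bw_\mathcal{L}$ via (\ref{sol2:lam2}), write $\nabla c(\bd)-\nabla c(\bd^*)$ in mean-value form using Assumption \ref{ass:strong}, build a quadratic Lyapunov function with off-diagonal blocks, prove a decay inequality of the form $\dot V\le-\rho V$, and finish with Gr\"onwall plus the Lipschitz map from $\bz$ to $\bw_\mathcal{L}$, keeping a single fixed equilibrium $\bz^*$ throughout. Two points, however, separate your sketch from a proof. First, your candidate takes $P\succ 0$ (diagonal blocks $\Xi^{-1}$, small $\varepsilon$ couplings). As stated this cannot satisfy $\dot V\le -2\rho_0 V$ globally: the equilibria of (\ref{gra2}) form a continuum (non-unique $\bP^*,\bpsi^*$ along $\ker(A)$ and $\ker(S)$), so a trajectory started at an equilibrium $\hat\bz\neq\bz^*$ stays put while a strictly positive-definite $V$ centered at $\bz^*$ remains a positive constant, contradicting exponential decay. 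You flag the issue (``positive definite modulo the kernels''), but the quotient/invariance repair you gesture at is exactly where care is needed; the paper's resolution is structural: in (\ref{dQ}) the diagonal blocks for $\bP$ and $\bpsi$ are $\alpha U_AU_A^\top$ and $\alpha U_SU_S^\top$ from the compact decompositions (\ref{svd}), (\ref{egd}), so $Q$ is only positive semi-definite and, by Lemma \ref{le-psd}, $V$ vanishes exactly on the equilibrium directions; the conclusion then follows by splitting $\bz(t)-\bz^*$ into its $\mathrm{row}(Q)$ and $\ker(Q)$ parts and absorbing the latter into a different element of $\mathcal{S}$, so that $V$ directly dominates $\lambda_{\min}(Q)\cdot\textnormal{dist}(\bx,\mathcal{S})^2/(\|T\|^2+1)$.

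Second, the decisive step---exhibiting cross terms for which the matrix inequality actually holds---is deferred in your plan to ``a delicate LMI-type argument.'' In the paper this is Lemma \ref{le-exp}, and the choice is not generic: the couplings are $I$ between $\bd$ and $\bmu$, $A_\mathcal{G}^\top$ between $\bP$ and $\bw_\mathcal{G}$, and $-\beta S$ between $\bpsi$ and $\bmu$ (note that only the last carries the small parameter; the mechanism is a large diagonal weight $\alpha$ rather than uniformly small off-diagonal terms), and establishing $-W(\bd)^\top Q-QW(\bd)-\rho Q\succeq 0$ with $\rho=\beta^2/\alpha$ requires the completion-of-squares and Schur-complement computation of Appendix \ref{pf-lem}, which is where $uI\preceq C(\bd)\preceq\ell I$ and the structure of $F_1,F_2,F_3$ enter. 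Since the existence of such a $Q$ is precisely the content of the theorem, your proposal identifies the correct route---including the varying-nearest-equilibrium caveat and the $\bw_\mathcal{L}$ Lipschitz step, both handled as you suggest---but leaves its central inequality unestablished and, as literally formulated ($P\succ0$), would need the semi-definite modification above to go through.
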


\begin{proof} To facilitate the proof, 
     we make the following two equivalent transformations for the ALC dynamics (\ref{gra2}):
    \begin{itemize}
        \item [1)] By equation (\ref{sol2:lam2}), we formulate $\bw_\mathcal{L}$  as 
        \begin{align}\label{wLe}
            \bw_\mathcal{L} =  D_\mathcal{L}^{-1}\left(  -\bd_\mathcal{L}-A_\mathcal{L} \bP + \bP^{in}_\mathcal{L}
        \right) 
        \end{align}
        and substitute it in equations (\ref{gra:d2}) and (\ref{gra:P2}). 
        \item [2)] By Lagrange's Mean Value Theorem, we have
        \begin{align} \label{nabc}
            \nabla c(\bd) - \nabla c(\bd^*) = C(\bd) (\bd -\bd^*)
        \end{align}
        where $C(\bd):= \text{diag}( c^{\prime\prime}_i(\hat{d}_i) )_{i\in\mathcal{N}}$ with some $\hat{d}_i$ depending on the value of $\bd$. Due to Assumption \ref{ass:strong}, we further have $u I \preceq C(\bd) \preceq \ell I$.
    \end{itemize}
    
As a consequence, the ALC dynamics (\ref{gra2}) can be equivalently reformulated as the following matrix form
\begin{align} \label{matt}
    \begin{split}
        \dot{\bz} = \Xi \underbrace{\begin{bmatrix}
        - C(\bd) - F_1 & - F_2^\top & \bm{0} & I_o^\top & I\\
        -F_2 & - F_3 & \bm{0} & A_{\mathcal{G}}^\top & \bm{0}\\
        \bm{0} & \bm{0} & \bm{0}& \bm{0} & S\\
        -I_o & -A_{\mathcal{G}} & \bm{0} & -D_{\mathcal{G}} & \bm{0}\\
        -I & \bm{0} & -S & \bm{0} &\bm{0}
        \end{bmatrix}}_{:=W(\bd)} \begin{bmatrix}
        \bd - \bd^* \\ \bP -\bP^*\\ \bpsi -\bpsi^*\\ \bw_{\mathcal{G}}
   - \bw_{\mathcal{G}}^*\\ \bmu -\bmu^*
        \end{bmatrix}
    \end{split}
\end{align}
where $I$ and $\bm{0}$  denote the identity matrix and zero matrix with appropriate dimensions,
$\Xi := \text{blockdiag}(\Xi_d,\Xi_P,\Xi_\psi,\Xi_\omega,\Xi_\mu)$, and $F_3: =  A_{\mathcal{L}}^\top D_{\mathcal{L}}^{-1}A_{\mathcal{L}}$. Besides, we have 
\begin{align*}
I_o: = \begin{bmatrix} I& \bm{0} \end{bmatrix},  \  F_1: = \begin{bmatrix}
    \bm{0} & \bm{0} \\ \bm{0}& D_{\mathcal{L}}^{-1}
    \end{bmatrix},\   F_2: = \begin{bmatrix} 
    \bm{0} & A_{\mathcal{L}}^\top D_{\mathcal{L}}^{-1}
    \end{bmatrix}
\end{align*}
where the first component and second component 
correspond to generator buses $i \in \mathcal{G}$ and   load buses $i \in \mathcal{L}$, respectively.  

To prove the global exponential stability of the ALC dynamics (\ref{gra2}), we design the quadratic Lyapunov function $V(\bz)$ as
\begin{align} \label{lya}
    V(\bz) = (\bz-\bz^*)^\top Q (\bz-\bz^*)
\end{align}
where $Q$ is defined by
\begin{align} \label{dQ}
    Q := \begin{bmatrix}
    \alpha I & \bo & \bo & \bo & I\\
    \bo & \alpha U_AU_A^\top & \bo & A_\mathcal{G}^\top  & \bo\\
    \bo & \bo & \alpha U_S U_S^\top & \bo & -\beta S\\
    \bo & A_\mathcal{G} & \bo & \alpha I &  \bo\\
    I &  \bo & -\beta S & \bo& \alpha I
    \end{bmatrix}.
\end{align}
Here,  $\alpha$ is a sufficiently large positive number and  $\beta$ is a sufficiently small positive number. $U_A$ is the right-singular matrix of matrix $A$ with the \emph{compact} singular value decomposition 
\begin{align} \label{svd}
    A = V_A \Sigma_A U_A^\top
\end{align} and $\Sigma_A\succ 0$.
$U_S$ is the normalized matrix corresponding to the \emph{compact} eigen-decomposition of matrix $S$ with
\begin{align}\label{egd}
    S = U_S \Sigma_S U_S^\top
\end{align}
and $\Sigma_S\succ 0$.
Thus we have 
\begin{align} \label{ppa}
    A_\mathcal{G}U_A U_A^\top = A_\mathcal{G},\     A_\mathcal{L}U_A U_A^\top = A_\mathcal{L}, \ S U_S U_S^\top = S.
\end{align}

Then we obtain the following two key lemmas, whose proofs are provided in Appendix \ref{pf-psd} and \ref{pf-lem}, respectively.

\begin{lemma} \label{le-psd}
  Matrix  $Q$ is positive semi-definite, i.e.,  $V(\bz)\geq 0$ for any $\bz$, and the set  
  \begin{align}\label{EPSS}
  \begin{split}
    \mathcal{M}&:=\left\{\hat{\bz}\,|\,V(\hat{\bz})\equiv 0\right\} \ \overset{\Delta}{=} \    \left\{\hat{\bz}\,|\,    \hat{\bd} = \bd^*, \hat{\bw}_\mathcal{G} = \bw_\mathcal{G}^*,  \right.\\
	&\qquad\qquad   \phantom{=\;\;}\left. \hat{\bmu} =\bmu^*, A \hat{\bP} = A\bP^*, S\hat{\bpsi} = S \bpsi^*\right\}.
  \end{split}
  \end{align}
\end{lemma}

\begin{lemma} \label{le-exp}
 Under Assumption \ref{ass:strong},   the time derivative of $V(\bz)$ along  the ALC dynamics (\ref{gra2}) satisfies that for $ \rho = \frac{\beta^2}{\alpha}>0$,
\begin{align} \label{upro}
\qquad  \frac{d V(\bz)}{d t} \leq -\rho V(\bz), \quad \forall t\geq 0.
\end{align}
\end{lemma}

By Lemma \ref{le-exp}, 
$ V(\bz(t))  \leq V(\bz(0))\cdot e^{-\rho t}$ for all $t\geq 0$. Decompose $\bz(t) - \bz^* = \bm{\delta}_1(t) + \bm{\delta}_2(t)$ such that $\bm{\delta}_1(t)\in \text{row}(Q)$ and $\bm{\delta}_2(t)\in \text{ker}(Q)$. Thus 
$V(\bz) = \bm{\delta}_1(t)^\top Q \bm{\delta}_1(t)$ and
\begin{align}
\begin{split}
    \text{dist}(\bx(t), \mathcal{S})  = &\inf_{\hat{\bx}(t)\in \mathcal{S}} ||\, [\bz(t); \bw_\mathcal{L}(t)] - [\hat{\bz}(t); \hat{\bw}_\mathcal{L}(t)]\,||\\
 \leq & \inf_{\hat{\bz}(t)\in \mathcal{M}} \sqrt{||T||^2+1}\cdot ||\bz(t) - \hat{\bz}(t)||\\
  \leq & \sqrt{||T||^2+1}\cdot ||\bz(t) - (\bz^*+\bm{\delta}_2(t)) ||\\
   \leq & \sqrt{ \frac{||T||^2+1}{\lambda_{\min}(Q)}    }\cdot ||\bm{\delta}_1(t) ||_Q \\
      \leq & \sqrt{ \frac{(||T||^2+1)\cdot V(\bz(0))}{\lambda_{\min}(Q)}    }\cdot \exp(-\frac{1}{2}\rho t)
\end{split}
\end{align}
where the first inequality is due to (\ref{wLe}) and let $\bw_\mathcal{L} - \hat{\bw}_\mathcal{L} = T (\bz - \hat{\bz})$ with corresponding matrix $T$. The second inequality is because $\bz^*+\bm{\delta}_2(t) \in \mathcal{M}$. For the third inequality, $\lambda_{\min}(Q)$ is the smallest positive eigenvalue of $Q$.

By taking $C_0: = \sqrt{ \frac{(||T||^2+1) V(\bz(0))}{\lambda_{\min}(Q)}    }$ and $\rho_0 := \rho/2$, 
 Theorem \ref{thm3} is proved. 
\end{proof}

\begin{remark} (Uniqueness of Equilibrium Point)  Lemma \ref{le-psd} indicates that the optimal $\bP^*$ and $\bpsi^*$ to the OLC problem (\ref{op2}) are not unique. The former is because the node-branch incidence matrix $A$ may not be of full column rank for a meshed network. The latter is caused because the (virtual) phase angle $\psi$ is defined in a relative reference frame in the power system without a slack bus, thus 
$A$ (or $\bar{A}$) is not of full row rank. Nevertheless, according to Theorem \ref{thm1}, the ALC dynamics (\ref{gra2}) eventually asymptotically converge to an equilibrium point which depends on the initial condition.
\end{remark}

\begin{remark} (Inequality Constraints) One natural question to ask about Theorem \ref{thm3} is whether the ALC dynamics
can still achieve global exponential convergence  when considering the inequality capacity constraints (\ref{olc:power_limits}, \ref{olc:thermal_limits}), or  (\ref{op:power_limits}, \ref{op:thermal_limits}). The  key challenge  is that the complete ALC algorithm (\ref{cm}) involves a discontinuous projection step, which creates difficulty in theoretical analysis. Actually, this question can be generalized as the problem  whether the standard projected primal-dual gradient dynamics (PDGD) is exponentially stable. In \cite[Remark 2]{guan}, it is conjectured that the PDGD with projection may not be exponentially stable due to the norm issue. Instead,  reference \cite{guan} proposes  a new PDGD using an augmented Lagrangian  to deal with the inequality constraints and proves it to be exponentially stable. Therefore, one of the future work is to leverage the augmented Lagrangian to design a distributed load frequency control algorithm with global exponential convergence. 
\end{remark}


\subsection{Dynamical Tracking Performance and Robustness} \label{sec-dyna}

In practice, the uncontrollable power injection $\bP^{in}$ is not a fixed value (i.e., step change) but time-varying due to the intrinsic volatility of renewable generation and load demand. Besides, the real implementation of the ALC algorithm suffers from 1) the  measurement and communication noises, 2) the model errors due to the use of DC power flow (\ref{DC}) and linear network dynamics (\ref{dynamic}). 
Hence, we study the dynamical tracking performance of the 
ALC dynamics (\ref{gra2}) in practical application 
 by leveraging its global exponential convergence.

Let $\bP^{in}(t)$ be the uncontrollable power injection at time $t$. Substituting it to the reduced ALC dynamics (\ref{gra2}), we can formulate the ALC dynamics under time-varying $\bP^{in}(t)$ as
\begin{align} \label{insdym}
    \dot{\bz} = \bm{f}(\bz) + H\bP^{in}(t)
\end{align}
with corresponding constant matrix $H$ and function $\bm{f}$.
Let $\bz^*(t) $ be an associated equilibrium point of dynamics (\ref{insdym}) given $\bP^{in}(t)$. Moreover, $\bz^*(t) $ is also a saddle point for the Lagrangian function of problem ~(\ref{op2}) under the uncontrollable power injection $\bP^{in}(t)$.

Taking time-varying power change, measurement noise and  model error into consideration,  the  actual load control dynamics  can be formulated as 
\begin{align}\label{pertu}
    \dot{\bz} = \bm{f}(\bz)+H\bP^{in}(t)+ \bm{g}(\bz,t)
\end{align}
where $\bm{g}(\bz,t)$ captures the real-time measurement and communication noise, model error and other potential mismatches.


We make the following standard assumption on bounded system mismatch and drift rate \cite{Hao, steven}.


\begin{assumption}\label{ass-changerate}
  The time-varying  equilibrium point $\bz^*(t)$ is differentiable and has a bounded drift rate in the sense that 
   there exists a positive constant $b_z$ such that  
         \begin{align} 
       ||\frac{d \bz^*(t)}{dt}  ||_Q &\leq b_z,\ \qquad \forall t\geq 0. \label{driz}
  \end{align}
In addition, the mismatch term $\bm{g}(\bz,t)$ in (\ref{pertu}) is bounded, i.e., there exists a positive constant $b_g$ such that  
    \begin{align}
        || \bm{g}(\bz,t)||_Q \leq b_g, \qquad \ \forall t\geq 0.
    \end{align}
 \end{assumption}



 Then the dynamical tracking properties under the actual load control dynamics (\ref{pertu}) are established as the following theorem. 
\begin{theorem} \label{thm-track}
Under Assumption \ref{assumption:feasibility}, \ref{ass:strong} and \ref{ass-changerate}, 
the tracking error of the actual load control dynamics (\ref{pertu}) is bounded in the sense that, for any time $ t\geq 0$,
 \begin{align} \label{trae}
    \begin{split}
           || \bz(t) -\bz^*(t)||_Q
     \leq  & \exp(-\frac{\rho}{2}t)\cdot ||\bz(0) - \bz^*(0)||_Q\\
      & + \left( 1-\exp(-\frac{\rho}{2}t)\right)\frac{2(b_z+b_g)}{\rho} 
    \end{split}
\end{align}
where
$Q$ and $\rho$ are given in  (\ref{dQ}) and (\ref{upro}) respectively.
\end{theorem}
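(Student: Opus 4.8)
The plan is to transfer the global exponential dissipation of Lemma~\ref{le-exp} to the perturbed, time-varying regime by treating the mismatch $\bm{g}(\bz,t)$ and the equilibrium drift $\dot{\bz}^*(t)$ as bounded additive disturbances acting on the same quadratic form. Let $\be(t):=\bz(t)-\bz^*(t)$ denote the tracking error and put $\mathcal{V}(t):=||\be(t)||_Q^2=\be(t)^\top Q\,\be(t)$ with $Q$ from (\ref{dQ}). Since $\bz^*(t)$ is an equilibrium of the nominal dynamics (\ref{insdym}) for the injection $\bP^{in}(t)$, we have $\bm{f}(\bz^*(t))+H\bP^{in}(t)=\bm{0}$, so along the actual dynamics (\ref{pertu}),
\begin{align*}
\dot{\be}=\big(\bm{f}(\bz)-\bm{f}(\bz^*(t))\big)+\bm{g}(\bz,t)-\dot{\bz}^*(t),
\end{align*}
and therefore
\begin{align*}
\dot{\mathcal{V}}=2\be^\top Q\big(\bm{f}(\bz)-\bm{f}(\bz^*(t))\big)+2\be^\top Q\,\bm{g}(\bz,t)-2\be^\top Q\,\dot{\bz}^*(t).
\end{align*}

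First I would bound the vector-field term. By Lagrange's Mean Value Theorem as in (\ref{nabc}) we have $\bm{f}(\bz)-\bm{f}(\bz^*(t))=\Xi\,W(\bd)\,\be$, with $W(\bd)$ the matrix of (\ref{matt}) evaluated at a mean-value point; and the chain of estimates proving Lemma~\ref{le-exp} uses only the uniform bounds $uI\preceq C(\bd)\preceq\ell I$ and the fixed network matrices, hence it is insensitive to which equilibrium appears. This gives, for every $t\geq0$,
\begin{align*}
2\be^\top Q\big(\bm{f}(\bz)-\bm{f}(\bz^*(t))\big)\leq-\rho\,\mathcal{V},\qquad\rho=\tfrac{\beta^2}{\alpha}.
\end{align*}
For the two cross terms I would use the Cauchy--Schwarz inequality in the positive-semidefinite pairing $\langle\bm{a},\bm{b}\rangle_Q:=\bm{a}^\top Q\bm{b}$ together with Assumption~\ref{ass-changerate}: $|2\be^\top Q\bm{g}(\bz,t)|\leq2\,||\be||_Q\,||\bm{g}(\bz,t)||_Q\leq2b_g\,||\be||_Q$ and $|2\be^\top Q\dot{\bz}^*(t)|\leq2\,||\be||_Q\,||\dot{\bz}^*(t)||_Q\leq2b_z\,||\be||_Q$. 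Collecting the three bounds yields the scalar inequality
\begin{align*}
\dot{\mathcal{V}}\leq-\rho\,\mathcal{V}+2(b_g+b_z)\sqrt{\mathcal{V}}.
\end{align*}

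Finally, setting $\phi(t):=\sqrt{\mathcal{V}(t)}=||\be(t)||_Q$, on any interval where $\phi>0$ it obeys $\dot{\phi}\leq-\tfrac{\rho}{2}\phi+(b_g+b_z)$; a standard comparison argument (regularizing $\sqrt{\cdot}$ to handle the zero set of $\phi$ and passing to the limit, or equivalently the comparison lemma applied to $\dot{\mathcal{V}}\leq-\rho\mathcal{V}+2(b_g+b_z)\sqrt{\mathcal{V}}$) then gives
\begin{align*}
\phi(t)\leq e^{-\frac{\rho}{2}t}\phi(0)+(b_g+b_z)\int_0^t e^{-\frac{\rho}{2}(t-s)}\,ds=e^{-\frac{\rho}{2}t}||\bz(0)-\bz^*(0)||_Q+\big(1-e^{-\frac{\rho}{2}t}\big)\frac{2(b_g+b_z)}{\rho},
\end{align*}
which is exactly the asserted bound (\ref{trae}). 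The main obstacle is the first step: one must verify that the dissipation rate $\rho$ of Lemma~\ref{le-exp} is \emph{uniform} over all equilibria $\bz^*(t)$ generated by the time-varying $\bP^{in}(t)$, so that it does not deteriorate as $\bz^*(t)$ drifts, and that the non-uniqueness of the equilibrium set $\mathcal{S}$ introduces no ambiguity; granting that, the Cauchy--Schwarz bound in the seminorm, the passage to a scalar ODE inequality, and the Gr\"onwall estimate are routine.
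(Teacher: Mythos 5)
Your proof is correct, but it takes a genuinely different route from the paper. You run a direct continuous-time Lyapunov/ISS argument: differentiate $\mathcal{V}(t)=\|\bz(t)-\bz^*(t)\|_Q^2$ along \eqref{pertu}, reuse the matrix inequality $R(\bd)=-W(\bd)^\top Q-QW(\bd)-\rho Q\succeq 0$ underlying Lemma \ref{le-exp} to dissipate the term $2\be^\top Q\bigl(\bm{f}(\bz)-\bm{f}(\bz^*(t))\bigr)$, bound the drift and mismatch terms by Cauchy--Schwarz in the $Q$-semi-inner product via Assumption \ref{ass-changerate}, and close with a comparison lemma for $\dot{\mathcal{V}}\leq-\rho\mathcal{V}+2(b_z+b_g)\sqrt{\mathcal{V}}$. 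The paper instead discretizes time into intervals $[m\Delta,(m+1)\Delta]$, compares the true trajectory with an auxiliary trajectory of the frozen-injection dynamics \eqref{insdym} started at $\bz(m\Delta)$, invokes Lemma \ref{le-exp} only as a black-box contraction for that frozen system, controls the gap between the two trajectories via a Lipschitz constant $\ell_f$ and an auxiliary bound $b_P$ on $\|\dot{\bP}^{in}\|$ derived from \eqref{op2:Y}, and then iterates the resulting recursion and lets $\Delta\to 0$ (L'H\^opital). Your approach is shorter and avoids $\ell_f$, $b_P$, and the limiting argument entirely, at the price of reopening Lemma \ref{le-exp}'s proof slightly: you must observe, as you do, that $R(\bd)\succeq 0$ holds uniformly in the mean-value matrix $uI\preceq C(\bd)\preceq\ell I$ of \eqref{nabc} and is independent of which equilibrium $\bz^*(t)$ is subtracted — the very same uniformity the paper implicitly needs when it applies Lemma \ref{le-exp} to the frozen system at each $\bz^*_m$, so this is not an extra hypothesis. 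Two small points to keep explicit if you write this up: Cauchy--Schwarz is legitimate here only because $Q\succeq 0$ (Lemma \ref{le-psd}), which you noted, and the step from $\dot{\mathcal{V}}$ to $\dot\phi$ needs the usual Dini-derivative or $\sqrt{\mathcal{V}+\epsilon}$ regularization on the zero set of $\phi$, which you also flagged; with those, the constants you obtain match \eqref{trae} exactly.
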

\begin{proof}
By constraint (\ref{op2:Y}), we have $
\bP^{in}(t) = \bd^*(t) + S \bpsi^*(t)$ for all $t$. According to the expansion in (\ref{eq:expV}),  we obtain
\begin{align}\label{eq:bouPin}
   &\quad  || \dot{ \bP}^{in}(t) || \leq  ||\dot{\bd}^*(t)|| + ||S \dot{\bpsi}^*(t) || \nonumber\\
    & \leq \sqrt{ 2\left(||\dot{\bd}^*(t)||^2 + || U_S\Sigma_S||^2\cdot ||U_S^\top \dot{\bpsi}^*(t) ||^2 \right)}\nonumber \\
    & \leq \sqrt{\frac{2}{\hat{\alpha}}}\cdot \sqrt{ (\alpha -1)||\dot{\bd}^*(t)||^2 + (\alpha-\beta^2||\Sigma_S||^2) ||U_S^\top \dot{\bpsi}^*(t) ||^2 }\nonumber\\
    & \leq \sqrt{\frac{2}{\hat{\alpha}}}\cdot \sqrt{V(\dot{\bz}^*(t))} = \sqrt{\frac{2}{\hat{\alpha}}}\cdot ||\dot{\bz}^*(t)||_Q \leq \sqrt{\frac{2}{\hat{\alpha}}}\cdot b_z
\end{align}
where $\hat{\alpha}: = \min\{\alpha-1, {{(\alpha-\beta^2||\Sigma_S||^2})/||U_S\Sigma_S||^2}\}$.

Given an infinitesimal time step $\Delta>0$, we consider the time period  $[m\Delta, (m+1)\Delta]$ where $m$ is a non-negative integer. Let  $\bz(t)$ be the state variable following the  real system dynamics (\ref{pertu}), while we denote $\hat{\bz}(t)$ for $t\in[m\Delta,(m+1)\Delta]$ as the state following the ALC dynamics (\ref{insdym}) with fixed power injection $\bP^{in}({m\Delta})$, and
  $\hat{\bz}(m\Delta) = {\bz}(m\Delta)$. For notational simplicity,  denote $\bz_m := \bz(m\Delta)$, which is similar for $\hat{\bz}_m$.
  
For any $t\in[m\Delta,(m+1)\Delta]$, 
 \begin{align}
   &  ||\bz(t) - \hat{\bz}(t)||_Q \nonumber \\ \leq & \int_{m\Delta}^t \left[ \sup_{\tau \in [m\Delta,(m+1)\Delta]}|| f(\bz(\tau)) -f(\hat{\bz}(\tau))  ||_Q \right]\, ds \nonumber\\ 
     & + \int_{m\Delta}^t ||H(\bP^{in}(s) -\bP^{in}(m\Delta)) + \bm{g}(\bz,s) ||_Q\,ds \nonumber \\
     \leq &    \left[  \sup_{\tau \in [m\Delta,(m+1)\Delta]}||  \bz(\tau) -\hat{\bz}(\tau)||_Q\right]\cdot \ell_f\cdot \Delta \nonumber\\
     &+ \int_{m\Delta}^{(m+1)\Delta} ||H\int_{m\Delta}^{s} (\frac{d \bP^{in}(\tau)}{d \tau})\  d \tau ||_Q \,ds \nonumber\\
     & + \int_{m\Delta}^{(m+1)\Delta} || \bm{g}(\bz,s)  ||_Q \,ds \nonumber \\
 \leq &    \left[  \sup_{\tau \in [m\Delta,(m+1)\Delta]}||  \bz(\tau) -\hat{\bz}(\tau)||_Q\right]\cdot \ell_f\cdot \Delta \nonumber  \\
 & + ( b_P\Delta+ b_g )\cdot\Delta \label{eq:diffzhat}
 \end{align} 
 where ${b}_P:= \frac{1}{2}\sqrt{\frac{2}{\hat{\alpha}}} b_z\cdot||Q^{1/2}H||$, and $\ell_f$ is the Lipschitz constant for function $f(\bz)$ (it can be checked that $f(\bz)$ is Lipschitz continuous with bounded $\ell_f$). Since (\ref{eq:diffzhat}) holds for any $t\in[m\Delta,(m+1)\Delta]$, we obtain
 \begin{align} \label{eq:supdffz}
     \sup_{t \in [m\Delta,(m+1)\Delta]}||  \bz(t) -\hat{\bz}(t)||_Q \leq \frac{(b_P\Delta+ b_g)\cdot\Delta }{1-\ell_f\cdot\Delta} 
 \end{align}

  For the tracking error at time $(m+1)\Delta$, we have
\begin{align} \label{traerr}
     &\quad  ||\bz_{m+1} - \bz^*_{m+1}||_Q \nonumber\\
        &\leq ||\bz^*_{m+1} - \bz^*_m||_Q +||\bz_{m+1} - \bz^*_m||_Q \nonumber \\
&\leq  || \int_{m\Delta}^{(m+1)\Delta} (\frac{d\bz^*(t)}{dt}) dt  ||_Q \nonumber\\
&\quad +  ||\hat{\bz}_{m+1} -\bz^*_m||_Q +   ||\bz_{m+1} -\hat{\bz}_{m+1}||_Q \nonumber\\
       & \leq  \  b_z\cdot \Delta + e^{-\frac{\rho\Delta}{2}}\cdot ||\bz_m - \bz^*_m||_Q + \frac{(b_P\Delta+ b_g)\Delta }{1-\ell_f\Delta} 
\end{align}
where the third inequality  results from Lemma \ref{le-exp} and (\ref{eq:supdffz}).

Let $t=m\Delta$. Using inequality (\ref{traerr}) recursively, we obtain
 \begin{align}\label{zt-}
    \begin{split}
      & || \bz(t) -\bz^*(t)||_Q \ 
      \leq  (e^{-\frac{\rho\Delta}{2}})^m\cdot ||\bz(0)-\bz^*(0)||_Q \\
      &\qquad\qquad+\frac{1-(e^{-\frac{\rho\Delta}{2}})^m}{1-e^{-\frac{\rho\Delta}{2}}}(b_z+  \frac{b_P\Delta+ b_g }{1-\ell_f\Delta})\cdot\Delta \\
      =&\   e^{-\frac{\rho }{2}t} \cdot||\bz(0)-\bz^*(0)||_Q \\
      &\qquad\qquad+\frac{ (1-e^{-\frac{\rho t}{2}})\cdot\Delta}{1-e^{-\frac{\rho \Delta}{2}}}(b_z+  \frac{b_P\Delta+ b_g }{1-\ell_f\Delta}).
    \end{split}
\end{align}
Note that inequality (\ref{zt-}) holds for any $\Delta\geq 0$ and non-negative integer $m$. Hence, let $\Delta\rightarrow 0$, using 
L'H\^opital's rule and standard arguments of Calculus, we obtain  inequality (\ref{trae}).

\end{proof}

Theorem \ref{thm-track} shows that the dynamical tracking errors of the system frequency and the load control schemes, i.e., $||\bw(t)-\bw^*||$ and $||\bd(t)-\bd^*(t)||$, are bounded. It is straightforward to see that as $t\rightarrow \infty$, the steady-state tracking error of the actual ALC dynamics (\ref{pertu}) is bounded by 
 \begin{align}\label{sted}
    \begin{split}
        \limsup_{t\to +\infty}   || \bz(t) -\bz^*(t)||_Q
     \leq   \frac{2(b_z+b_g)}{\rho}. 
    \end{split}
\end{align}

 Essentially, the drift of the equilibrium points $\bz^*(t)$ is caused by the time-varying uncontrollable power injection $\bP^{in}(t)$. It can be proved that the drift rate of the equilibrium points $\bz^*(t)$ is bounded with a bounded drift rate of $\bP^{in}(t)$, which is restated as the following proposition.
 
 \begin{proposition} \label{pro:bound}
Under Assumption \ref{assumption:feasibility} and \ref{ass:strong}, if the time-varying uncontrollable power injection $\bP^{in}(t)$ and the equilibrium points $\bz^*(t)$ are time differentiable, there exists a positive constant $\eta$ such that 
 \begin{align*}
    ||\frac{d \bz^*(t)}{dt}||_Q\leq \eta \cdot \sup_{t\geq 0}||\frac{d \bP^{in}(t)}{dt}||, \quad \forall t\geq 0.
 \end{align*}
 \end{proposition}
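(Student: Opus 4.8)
The plan is to differentiate the two equality constraints that characterize the equilibrium $\bz^*(t)$ with respect to time, collect the resulting linear relations into a single linear system of the form $\mathcal{A}(t)\,\dot{\bz}^*(t) = \mathcal{B}\,\dot{\bP}^{in}(t)$, and then bound $\|\dot{\bz}^*(t)\|_Q$ by a constant multiple of $\|\dot{\bP}^{in}(t)\|$ using a uniform bound on the relevant (pseudo)inverse. Concretely, the equilibrium conditions are the stationarity/feasibility equations obtained by setting the right-hand side of (\ref{gra2}) to zero together with (\ref{sol2:lam2}): $\nabla c(\bd^*) = \bw^* + \bmu^*$, $A^\top\bw^* = \bo$, $S\bmu^* = \bo$, and the two balance equations $\bd^* = \bP^{in} - D\bw^* - A\bP^*$ and $\bd^* = \bP^{in} - S\bpsi^*$. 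The first step is to note that on the relevant subspaces (the row spaces of $A$ and $S$, and using $A^\top\bw^* = \bo$, $S\bmu^* = \bo$), the optimal $\bd^*$, $\bw^*$, $\bmu^*$ are in fact \emph{uniquely} determined by $\bP^{in}$: indeed $A^\top\bw^*=\bo$ forces $\bw^*$ into $\ker(A^\top)$, and combined with $D\bw^* = \bP^{in}-\bd^*-A\bP^*$ one can solve for the uniquely-determined components; similarly $S\bmu^*=\bo$ pins down $\bmu^*$. This uniqueness (of $\bd^*,\bw^*,\bmu^*$, and of $A\bP^*$, $S\bpsi^*$) is exactly what Lemma \ref{le-psd} / the remark on uniqueness asserts, so $\bz^*(t)$ projected onto $\mathrm{row}(Q)$ is a genuine (single-valued) function of $\bP^{in}(t)$, and in fact an \emph{affine} function on each region where $C(\bd^*)$ — equivalently the active Hessian — is locally constant; more carefully, by Assumption \ref{ass:strong} the map $\bP^{in}\mapsto$ (unique part of) $\bz^*$ is globally Lipschitz because $\nabla c$ is strongly monotone and Lipschitz.

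The second step is the explicit Lipschitz estimate. Differentiating, $C(\bd^*(t))\,\dot{\bd}^* = \dot{\bw}^* + \dot{\bmu}^*$ with $uI \preceq C(\bd^*) \preceq \ell I$ (Lemma's mean-value observation (\ref{nabc})), $A^\top \dot{\bw}^* = \bo$, $S\dot{\bmu}^* = \bo$, and $\dot{\bd}^* + D\dot{\bw}^* + A\dot{\bP}^* = \dot{\bP}^{in} = \dot{\bd}^* + S\dot{\bpsi}^*$. Projecting onto $\mathrm{row}(Q)$ and treating the components in $\ker(Q)$ as irrelevant (since $\|\cdot\|_Q$ annihilates them), one gets a square, invertible linear map $\mathcal{A}$ from the reduced coordinates of $\dot{\bz}^*$ to $\dot{\bP}^{in}$, whose inverse is bounded uniformly in $t$: the only $t$-dependence sits in $C(\bd^*(t))$, which is sandwiched between $uI$ and $\ell I$, so $\|\mathcal{A}(t)^{-1}\|$ is bounded by a constant depending only on $u,\ell$, and the fixed network matrices $A, D, S$. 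Then $\|\dot{\bz}^*(t)\|_Q \le \lambda_{\max}(Q)^{1/2}\,\|\text{reduced }\dot{\bz}^*\| \le \lambda_{\max}(Q)^{1/2}\,\|\mathcal{A}(t)^{-1}\|\,\|\dot{\bP}^{in}(t)\| \le \eta\,\sup_{t\ge 0}\|\dot{\bP}^{in}(t)\|$, which is the claim, with $\eta$ an explicit constant built from $u$, $\ell$, $\lambda_{\max}(Q)$, and the singular values of $A$ and $S$ restricted to their row spaces.

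The main obstacle I expect is the bookkeeping around the rank-deficiency of $A$ and $S$: $\bP^*$ and $\bpsi^*$ are genuinely non-unique, so $\dot{\bP}^*, \dot{\bpsi}^*$ are not determined by $\dot{\bP}^{in}$ — one must carefully restrict attention to $A\bP^*$ and $S\bpsi^*$ (equivalently, work in the coordinates $U_A^\top\bP$ and $U_S^\top\bpsi$ from (\ref{svd})–(\ref{egd})), show that \emph{these} combinations are Lipschitz functions of $\bP^{in}$, and verify that this is precisely what enters $\|\cdot\|_Q$ via the structure of $Q$ in (\ref{dQ}). A second, milder subtlety is that $\bz^*(t)$ is only assumed differentiable (not that the selection is the Lipschitz one), so strictly one should argue that \emph{any} differentiable selection has derivative lying in $\mathrm{row}(Q)$ plus a $\ker(Q)$ part, and $\|\cdot\|_Q$ discards the latter; then the bound on the $\mathrm{row}(Q)$ part follows from the uniqueness argument above. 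Once these linear-algebra points are pinned down, the estimate itself is routine.
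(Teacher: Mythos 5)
Your proposal matches the paper's proof in essentially every step: the paper likewise differentiates the KKT/equilibrium conditions of problem (\ref{op2}), passes to the uniquely determined reduced coordinates $U_A^\top \bP$ and $U_S^\top \bpsi$ via (\ref{svd})--(\ref{egd}) to handle the rank deficiency of $A$ and $S$, uses $uI \preceq \nabla^2 c \preceq \ell I$ to obtain a $t$-uniform bound on the inverse of the resulting Jacobian, and then converts back to the $\|\cdot\|_Q$ norm exactly as you indicate. The only differences are presentational: the paper writes the system through the Lagrangian dual variables $(\bpi,\bnu)$ (which coincide with $-\bw$, $-\bmu$ at equilibrium) and handles the one step you assert rather than prove -- nonsingularity of the saddle-point Jacobian and the uniform lower bound on its smallest singular value -- by citing standard saddle-point-matrix results.
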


The proof of Proposition \ref{pro:bound} is provided in Appendix \ref{app:pro}.


\section{Case Studies}\label{sec:simulations}

The effectiveness and robustness of the proposed ALC algorithm are demonstrated in numerical simulations. In particular, the performance of the ALC algorithm under step and continuous power changes is tested, and the cases with inaccurate damping coefficients are demonstrated. The impact of noises in measurements is also studied numerically.

\subsection{Simulation Setup}
The 39-bus New England power network in Figure \ref{39bus} is used as the test system. The simulations were run on Power System Toolbox (PST) \cite{pst1}, and we embedded the proposed ALC algorithm (\ref{cm}) through modifying the dynamic model functions of PST. Compared to the analytic model \eqref{dynamic}, the PST simulation models are more complicated and realistic, which involve the classic two-axis subtransient generator model, the IEEE Type DC1 excitation system model, the alternating current (AC) power flow model, and different types of load models. Detailed configuration and parameters of the simulation model are available online \cite{pst2}.  

\begin{figure}
	\centering
	\includegraphics[scale=0.43]{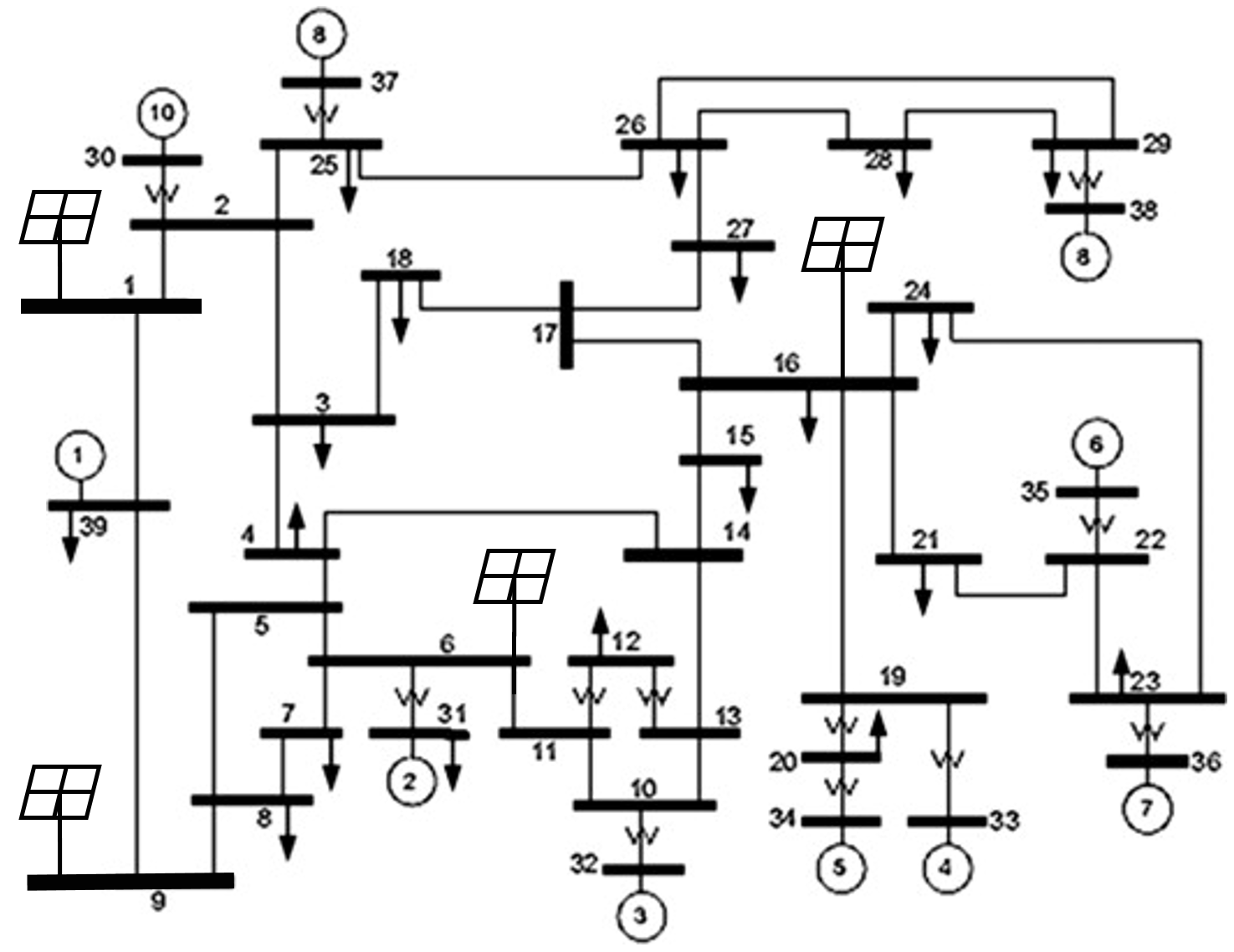}
	\caption{The 39-bus New England power network.}
	\label{39bus}
\end{figure}

There are 11 generators located at bus-29 to bus-39, which are the generator buses. To simulate continuous changes in power supply, four photovoltaic (PV) units are added to bus-1, bus-6, bus-9, and bus-16. Since PV units are integrated with  power electronic interfaces, we regard them as negative loads rather than swing generators. Consequently, bus-1 to bus-28 are load buses with a total active power demand of 6.2 GW. Every load bus has an aggregate controllable load, and the disutility function for load control is 
$$c_i\left(d_i\right)=\vartheta_i\cdot d_i^2$$
where the cost coefficients $\vartheta_i$ are set to 1 per unit (p.u.) for bus-1 to bus-5, and 5 p.u. for other load buses. The adjustable load limits are set as $\overline{d}_i=-\underline{d}_i=0.4$ p.u. with the base power being 100 MVA. In addition, the loads are controlled every 250 ms, which is a realistic estimate of the time-resolution for load control \cite{rea}. The damping coefficient $D_i$ of each bus is set to 1 p.u. For the load controller, the step sizes $\epsilon$ and the constants $K_i$ are all set to 0.5 p.u.

\subsection{Step Power Change}

At time $t=1$ s, step load increases of 1 p.u.  occurred at bus-1, bus-6, bus-9, and bus-16. With or without ALC, the system frequency is illustrated in Figure \ref{ALC_fre}. 
It is observed that the power network itself is not capable of restoring the nominal system frequency without ALC. 
In contrast, the proposed ALC scheme can bring the system frequency back to the nominal value. Figures \ref{load_c} presents the load adjustments and the total cost of load control under ALC, respectively. It is seen that the loads with lower cost coefficients $\vartheta_i$ tend to make larger adjustments, which are bounded by the capacity limits. 
 This observation indicates that the load adjustments are computed  to achieve system-wide efficiency although the control decisions are made locally. As a result, the total cost of the ALC scheme converges to the optimal cost of the OLC problem \eqref{olc} or \eqref{op} in the steady state. 

\begin{figure}[thpb]
		\raggedleft
	\includegraphics[scale=0.39]{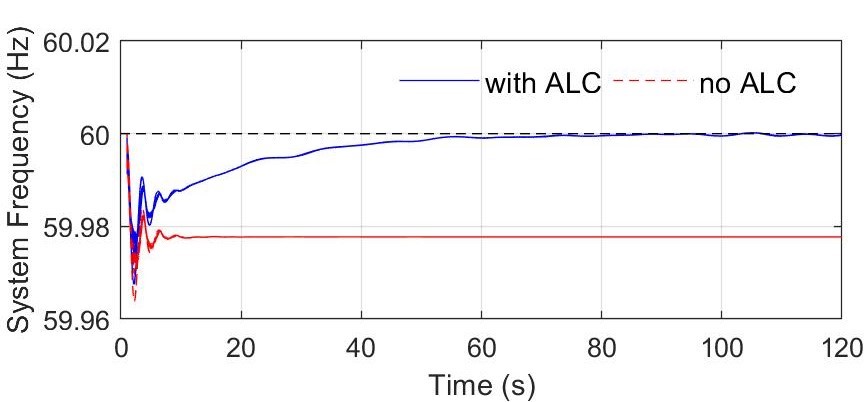} 
	\caption{The frequency dynamics under step power changes.}
	\label{ALC_fre}
\end{figure}

\begin{figure}[thpb]
	\raggedleft
		\includegraphics[scale=0.39]{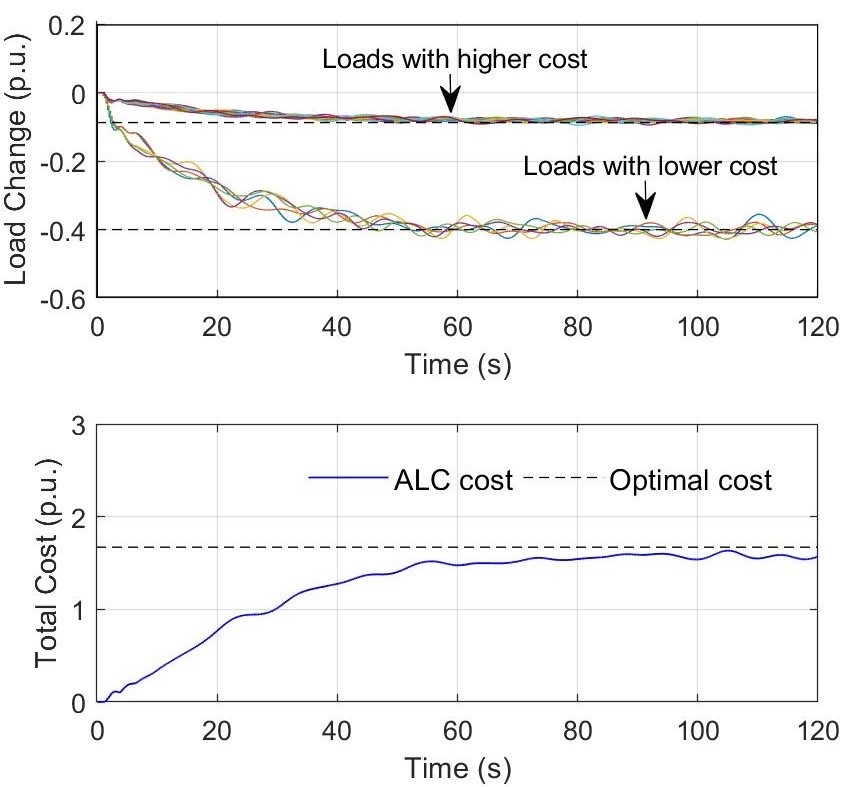}
	\caption{The load adjustment scheme and the total ALC cost.}
	\label{load_c}
\end{figure}

\subsection{Continuous Power Change}\label{ca-con}

We next study the performance of ALC under continuous power changes. 
To this end, the PV generation profiles of a real power system located within the territory of Southern California Edison are utilized as the
power outputs of the four PV units. 
The original 6-second data of PV outputs are linearly interpolated to generate power outputs every 0.01 second, which is consistent with the resolution of PST dynamical simulation. The PV power outputs over 10 minutes are shown in Figure \ref{pv}.  Figures \ref{fre_pv} and \ref{vol} illustrate the dynamics of system frequency and voltage magnitudes, respectively.

\begin{figure}[thpb]
	\centering
	\includegraphics[scale=0.33]{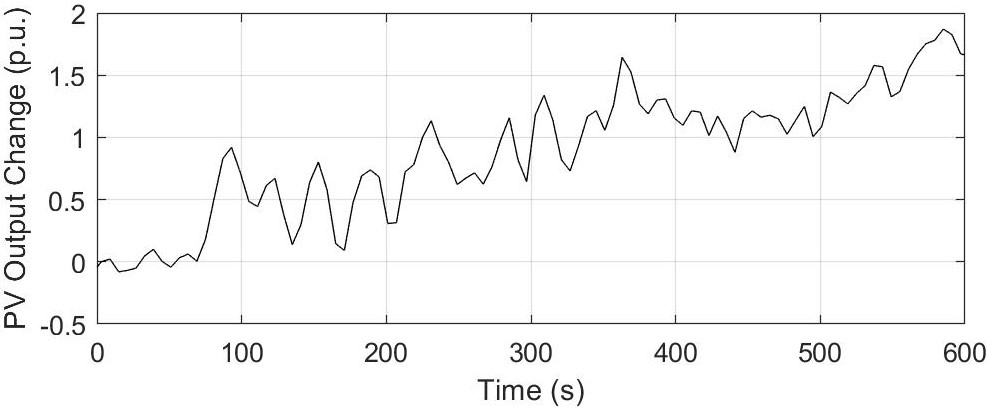}
	\caption{The PV power outputs.}
	\label{pv}
\end{figure}

\begin{figure}[thpb]
	\centering
	\includegraphics[scale=0.24]{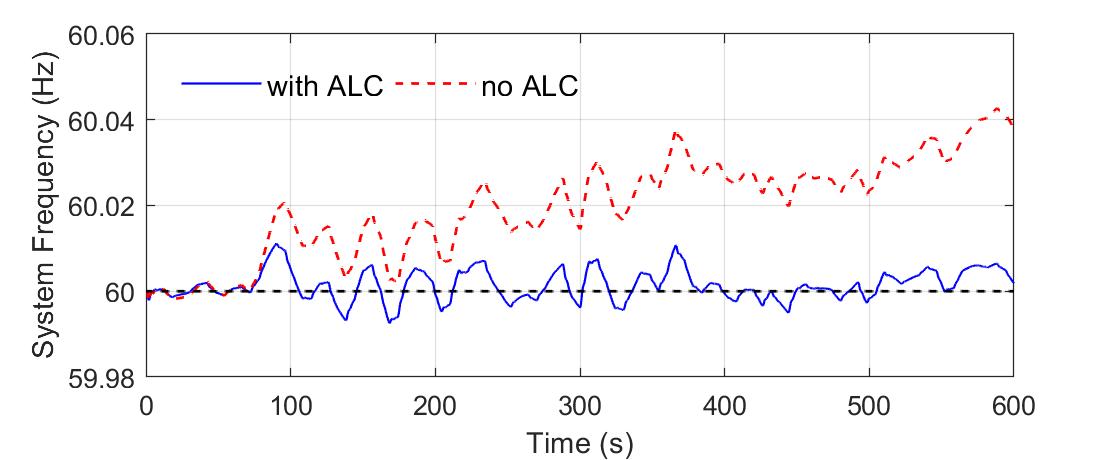}
	\caption{The frequency dynamics under continuous power changes.}
	\label{fre_pv}
\end{figure}

\begin{figure}[thpb]
	\centering
	\includegraphics[scale=0.305]{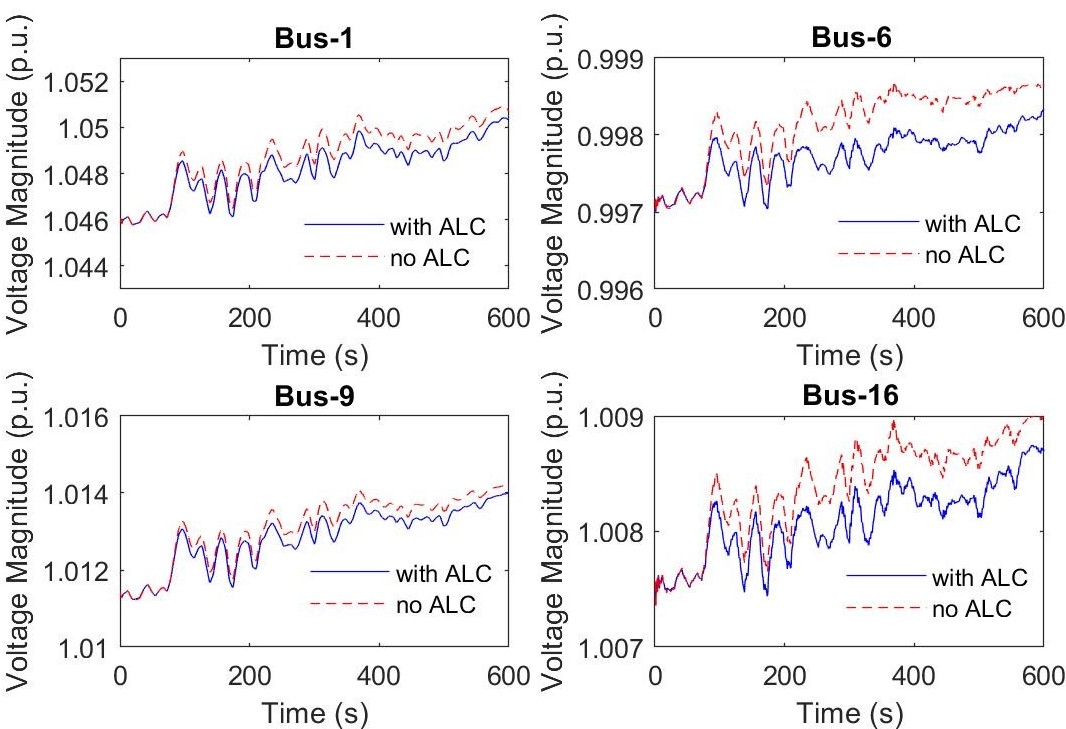}
	\caption{The dynamics of voltage magnitudes at the PV buses.}
	\label{vol}
\end{figure}
 
 From Figure \ref{fre_pv}, it is seen that ALC can effectively maintain the system frequency around the nominal value under 
 time-varying power imbalance. Although the proposed ALC algorithm is designed for step power changes, it can handle the case with continuous power disturbance due to the utilization of real-time frequency and power flow information. \add{Besides, from Figure \ref{vol}, it is observed that the voltage rise caused by increased PV generation is alleviated with the ALC scheme. 
 The reason is that the power imbalance is eliminated by the coordinated adjustment of many
ubiquitously distributed loads when using the ALC scheme, instead of the generation control of few generators, thus 
 the voltage rise (or descent) along with the power flow is mitigated. From the simulation results, the ALC scheme not only can maintain system frequency, but also may improve the dynamics of voltage magnitudes. 
 }

\subsection{Impact of Inaccurate Damping Coefficients}

This part is devoted to understanding the impact of inaccurate damping coefficients on the performance of ALC. Let the damping coefficient $\tilde{D}$ used by the controller be $k$ times of the accurate value ${D}$ with $\tilde{D}_i=k\cdot{D}_i$ for each bus $i\in\mathcal{N}$. Then we tuned the factor $k$ to test the performance of ALC under step power changes. Figure \ref{damp} compares the frequency dynamics using the ALC scheme with different $k$. 

\begin{figure}[thpb]
		\centering
	\includegraphics[scale=0.33]{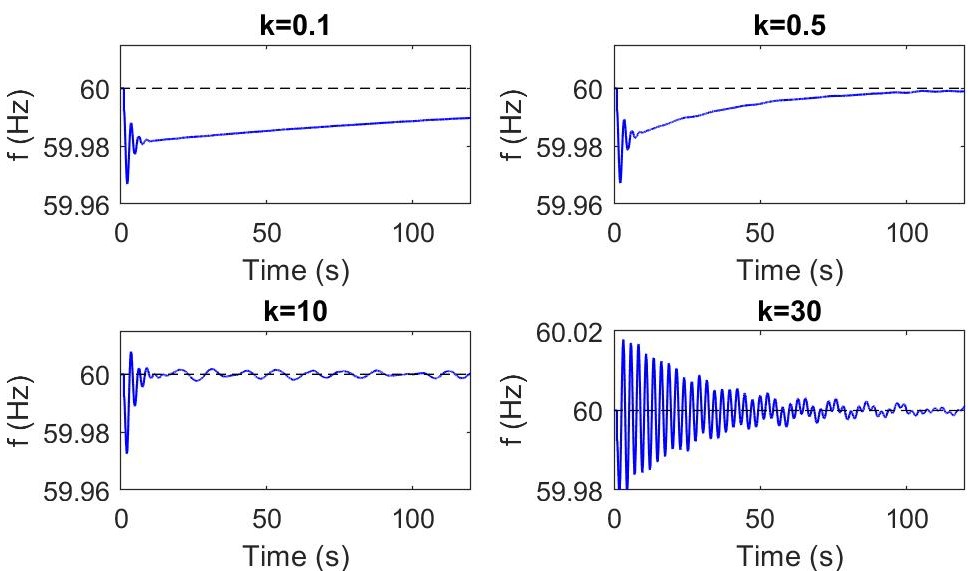}
	\caption{The frequency dynamics under inaccurate damping coefficients.}
	\label{damp}
\end{figure}

As shown in Figure \ref{damp}, the convergence of system frequency becomes slower when smaller damping coefficients are used. 
As the utilized damping coefficients  approach zero, ALC can still stabilize the system frequency but can not restore the nominal value. That is because when $D=0$, the OLC problem \eqref{op} imposes no restriction on the system frequency. As a result, 
only the power imbalance is eliminated, but the nominal frequency cannot be restored. 
In contrast, when larger damping coefficients are utilized, the convergence of frequency dynamics becomes faster, at the cost of increased oscillations. Generally, the ALC scheme can work well 
 under moderate inaccuracies in the damping coefficients $D$.

\subsection{Impact of Measurement Noise}

 Recall that the implementation of ALC requires 
the local measurement of frequency deviation $\omega_i$ and adjacent power flows $(P_{ki}, P_{ij})$ at each bus $i \in \mathcal{N}$. Therefore this part studies how the  measurement noises affect the performance of ALC.

First, consider the noise  $\xi_i^\omega$ in the measurement of $\omega_i$ and let the measured frequency deviation be $\tilde{\omega}_i=\omega_i+\xi_i^\omega$. Assume that the noise $\xi_i^\omega$  follows Gaussian distribution $\mathcal{N}(0,\,\sigma_\omega^{2})$, then  
the standard deviation $\sigma_\omega$ is tuned to test the performance of ALC under step power changes. In each simulation, 
 the noise $\xi_i^\omega$ is generated independently over time and across buses. The resultant frequency dynamics and load adjustment scheme are shown as Figure \ref{f_n}.

\begin{figure}[thpb]
	\centering
	\includegraphics[scale=0.07]{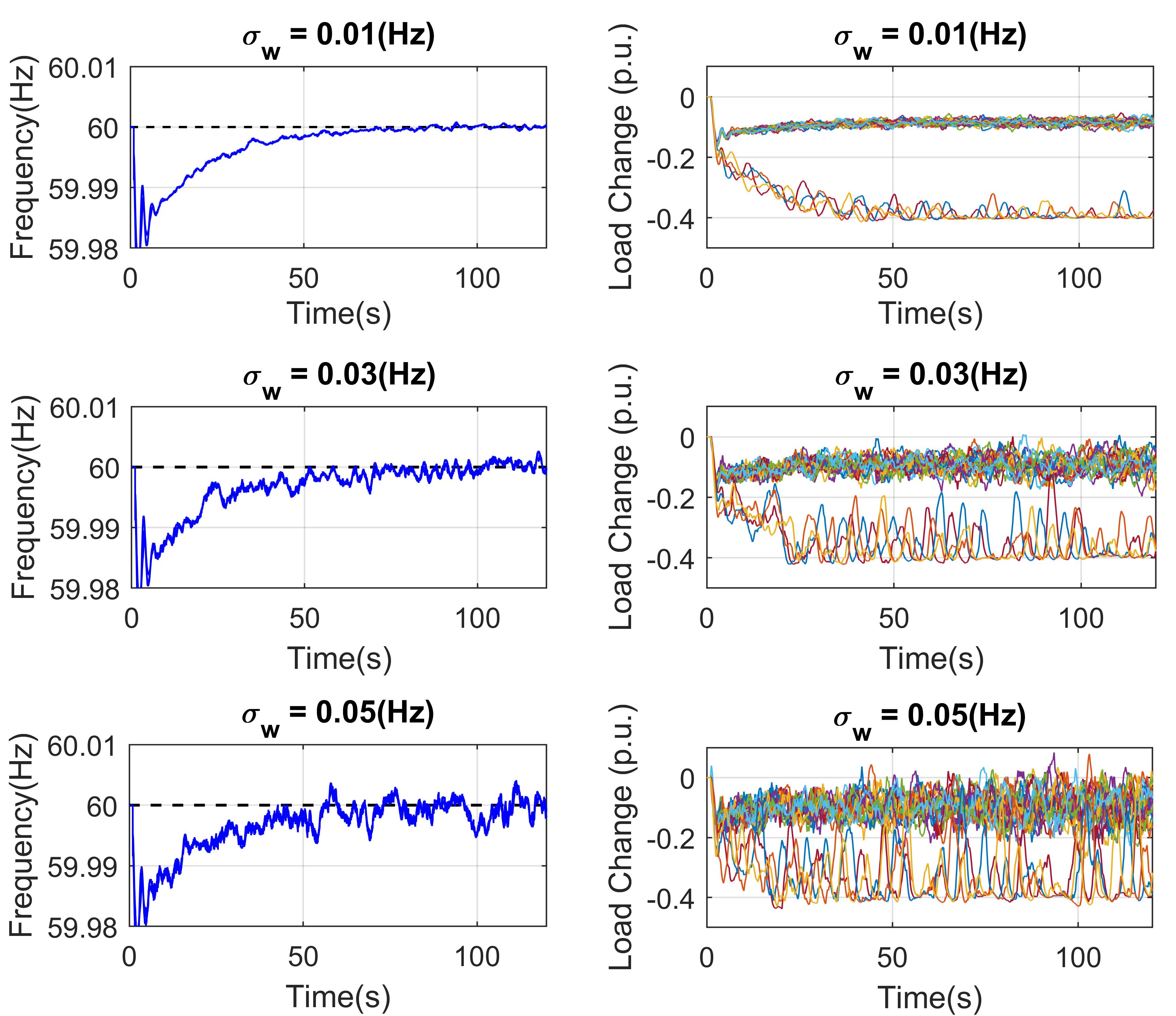}
	\caption{The frequency dynamics and load adjustment with different noises in frequency measurement.}
	\label{f_n}
\end{figure}

Then, we inject noise $\xi_{ij}^P$ to the measurement of power flow with $\tilde{P}_{ij} = P_{ij}+ \xi_{ij}^P$ and assume that  $\xi_i^P \sim \mathcal{N}(0,\,\sigma_P^{2})$.
The frequency dynamics and load adjustment scheme  under different levels of power flow noises are shown in Figure \ref{f_p}.
\begin{figure}[thpb]
	\centering
	\includegraphics[scale=0.07] {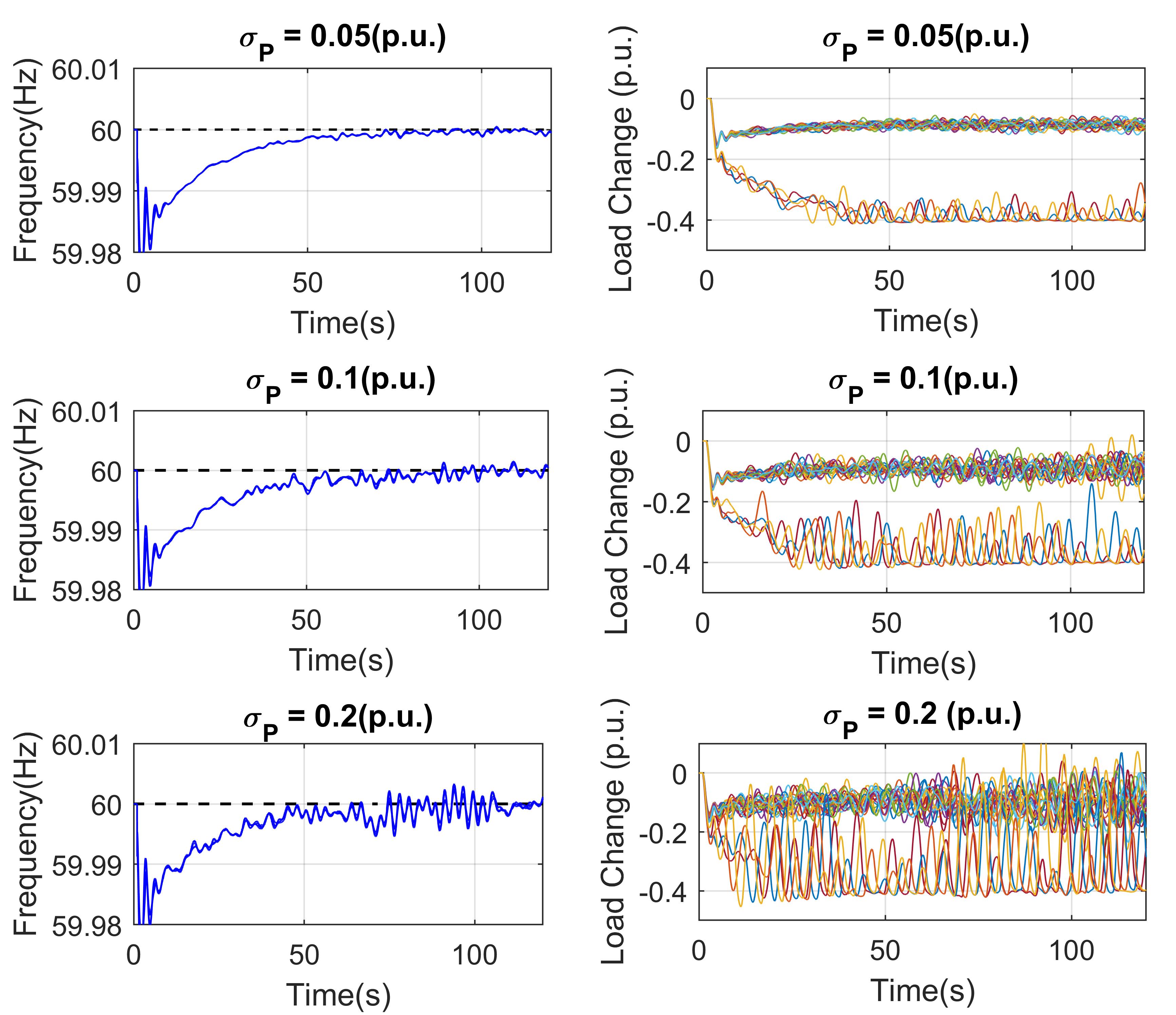}
	\caption{The frequency dynamics and load adjustment with different noises in power flow measurement.}
	\label{f_p}
\end{figure}

\add{
From Figures \ref{f_n} and \ref{f_p}, it is observed that the system frequency can be restored to the nominal value under the measurement noise, while the loads are continuously modulated in response to the measurement errors. Moreover, in both cases, higher level of noise leads to larger oscillations in the system frequency and greater fluctuations of the load adjustment.
}

\section{Conclusion}\label{sec:conclusion}

Based on the reverse engineering approach, we developed a fully distributed ALC mechanism for frequency regulation in power systems. The combination of ALC and power network dynamics was interpreted as a partial primal-dual gradient algorithm to solve an optimal load control problem. As a result, relying purely on local measurement and local communication, ALC can eliminate power imbalance and restore the nominal frequency with minimum total cost of load adjustment, while respecting operational constraints such as load power limits and line thermal limits. 
Numerical simulations of the 39-bus New England system showed that ALC can maintain system frequency under step or continuous power changes, and is robust to inaccuracy in damping coefficients as well as measurement noises.

\appendices

{\color{black}

\section{Proof of Lemma \ref{le-psd}} \label{pf-psd}

Define $\Dz : = \bz -\bz^* = [\Dd; \DP;\Dpsi, \Dw,\Dmu]$. Split the matrix  $V_A $ in (\ref{svd}) as $V_A =[V_A^{\mathcal{G}}; V_A^{\mathcal{L}}]$ where  $V_A^{\mathcal{G}}$ and  $V_A^{\mathcal{L}}$ respectively collect the rows w.r.t. the generator buses and the load buses. Then we have $A_\mathcal{G}= V_A^{\mathcal{G}} \Sigma_A U_A^\top $ by (\ref{svd}).

For any $\Dz$, we have 
\begin{align} \label{eq:expV}
    \begin{split}
    V(\bz) = &  \Dz^\top Q \Dz = \alpha  ||\Dd||^2 + \alpha||U_A^\top \DP||^2 \\
    &+ \alpha ||U_S^\top \Dpsi||^2 + \alpha ||\Dw||^2 +\alpha ||\Dmu||^2 \\
    &+ 2\Dd^\top \Dmu + 2 \Dw^\top A_\mathcal{G} \DP -2\beta \Dpsi^\top S \Dmu \\
    = & ||\Dd +\Dmu||^2 + ||\Dw +A_\mathcal{G}\DP ||^2 \\
    & + ||\Dmu -\beta  S\Dpsi||^2 + (\alpha-1)||\Dd||^2 \\
    & + (\alpha-2)||\Dmu||^2+(\alpha-1)||\Dw||^2 \\
    & +(U_S^\top \Dpsi)^\top \underbrace{(\alpha I - \beta^2 \Sigma_S^2)}_{:=Z_1} U_S^\top \Dpsi\\
    & + (U_A^\top \DP)^\top \underbrace{(  \alpha I - \Sigma_A {V_A^\mathcal{G}}^\top V_A^\mathcal{G} \Sigma_A)}_{:=Z_2}U_A^\top \DP
    \end{split}
\end{align}

Since parameter $\alpha$ is sufficiently large and parameter $\beta$ is sufficiently positively small, we have 
\begin{subequations}
\begin{align}
    Z_1&\succeq (\alpha  -\beta^2 ||\Sigma_S||^2) \cdot I \succ 0\\
    Z_2&\succeq (\alpha - ||V_A^\mathcal{G} \Sigma_A||^2)\cdot I \succ 0
\end{align}
\end{subequations}
Therefore, $V(\bz)\geq 0$ for any $\Dz$, i.e., $Q\succeq 0$. In addition, $V(\bz) = 0$ if and only if 
\begin{align*}
    \Dd = 0, \Dw =0, \Dmu =0, U_A^\top \DP =0, U_S^\top \Dpsi =0 
\end{align*}
It can be further checked that by equations (\ref{svd}) and (\ref{egd}), 
\begin{align*}
 &   \left\{(\DP,\Dpsi)\, |\, U_A^\top \DP =0, U_S^\top \Dpsi =0    \right\}  \\ 
  &\qquad\qquad   \overset{\Delta}{=} \ \left\{(\DP,\Dpsi)\, |\, A \DP =0, S \Dpsi =0    \right\} 
\end{align*}

\section{Proof of Lemma \ref{le-exp}} \label{pf-lem}

Without loss of generality, let the step size matrix $\Xi$ be the identity matrix $I$ for simplicity. 
Then the time derivative of $V(\bz)$ can be formulated as 
\begin{align}
    \begin{split}
        \frac{d V(\bz)}{d t} &= \dot{\bz}^\top Q (\bz-\bz^*) + (\bz-\bz^*)^\top Q\dot{\bz}\\
        & =  (\bz-\bz^*)^\top[W(\bd)^\top Q+QW(\bd)](\bz-\bz^*)
    \end{split}
\end{align}
Hence, it is sufficient to prove Lemma \ref{le-exp} by showing 
\begin{align}
   R(\bd):=  - W(\bd)^\top Q-QW(\bd)-\rho Q\succeq 0
\end{align}
for any  $\bd$. 

Plugging the definition of $Q$ (\ref{dQ}) and $\rho = \frac{\beta^2}{\alpha}$, we obtain
\begin{align} \label{Rd}
    \begin{split}
        R(\bd) 
      = \begin{bmatrix}
      L_d&  L_{Pd}^\top   & (\beta-1) S & F_2^\top  A_{\mathcal{G}}^\top &  L_{d\mu}\\
     L_{Pd}   & L_P& \bm{0} &  L_{\omega P}^\top & -F_2\\
        (\beta-1) S & \bm{0} & L_\psi & \bm{0} & -\frac{\beta^3}{\alpha} S\\
         A_{\mathcal{G}} F_2 &  L_{\omega P}& \bm{0} & L_\omega & I_o\\
         L_{d\mu} & -F_2^\top  & -\frac{\beta^3}{\alpha} S & I_o^\top & L_\mu
        \end{bmatrix}
    \end{split}
\end{align}
where 
\begin{subequations}
\begin{align}
    L_d & :=2 \alpha C(\bd) + 2\alpha F_1-2I - \beta^2 I 
    \label{Ld}\\
    L_P &:=  2\alpha  F_3+2  A_{\mathcal{G}}^\top  A_{\mathcal{G}} -\beta^2 U_A U_A^\top\\
    L_\psi &: =  2\beta SS -\beta^2 U_S U_S^\top\\
    L_\omega &: =2\alpha D_{\mathcal{G}}-2  A_{\mathcal{G}} A_{\mathcal{G}}^\top-\beta^2 I
    \\
    L_\mu & : = 2I-2\beta SS - \beta^2 I 
    \\
    L_{Pd}&:=  2\alpha  F_2 +  A_{\mathcal{G}}^\top I_o
    \\
    L_{d\mu} & :=  -C(\bd)-F_1 +{\beta^2}/{\alpha}\cdot I \\
    \begin{split}
    L_{\omega P} & : = D_\mathcal{G} A_{\mathcal{G}} +  A_{\mathcal{G}} F_3^\top -{\beta^2}/{\alpha}\cdot A_{\mathcal{G}} \\
    &\ = \underbrace{ \begin{bmatrix} D_\mathcal{G}-\frac{\beta^2}{\alpha} I & A_\mathcal{G} A_\mathcal{L}^\top D_\mathcal{L}^{-1}
              \end{bmatrix} }_{:=H_1} \begin{bmatrix}  A_\mathcal{G} \\A_\mathcal{L} 
              \end{bmatrix} =H_1A
    \end{split}
\end{align}
\end{subequations}

Some terms are cancelled out by using (\ref{ppa}) 
when deriving the formulation of $R(\bd)$ (\ref{Rd}). 
The key observation to show $R(\bd)\succeq 0$ is that $R(\bd)$ is almost diagonally dominant with positive (semi-)definite diagonal blocks when $\alpha$ is sufficiently large and $\beta$ is  positively small enough. 

For any vector $\be := [\be_d; \be_P; \be_\psi;\be_\omega; \be_\mu ]$ corresponding to the components of $R(\bd)$ (\ref{Rd}), the quadratic term $\be^\top R(\bd) \be$ is formulated as follows:
\begin{align} \label{drd}
    \begin{split}
      &  \be^\top R(\bd) \be = \be_d^\top L_d \be_d + \be_P^\top L_P \be_P + \be_\psi^\top L_\psi \be_\psi + \be_w^\top L_w \be_w \\
        &\qquad\quad + \be_\mu^\top L_\mu \be_\mu 
        +  2 \be_P^\top L_{Pd} \be_d  + 2\be_\omega^\top  A_{\mathcal{G}} F_2 \be_d\\
        &\qquad\quad  +2\be_\omega^\top L_{\omega P}  \be_P+ 2(\beta-1) \be_\psi^\top S \be_d + 2\be_\mu^\top L_{d\mu} \be_d\\
        &\qquad \quad - 2\be_P^\top F_2 \be_\mu - 2{\beta^3}/{\alpha}\cdot \be_\mu^\top S\be_\psi + 2\be_w^\top I_o \be_\mu\\
        & = \be_\psi^\top T_\psi \be_\psi + \be_w^\top T_w \be_w+ \be_\mu^\top T_\mu \be_\mu + || \be_d + F_2^\top  A_{\mathcal{G}}^\top \be_\omega  ||^2  \\ 
        &\quad + || \beta U_A^\top \be_P + \frac{1}{\beta}\Sigma_A V_A^\top H_1^\top \be_\omega  ||^2+ ||\frac{\beta-1}{\beta} \be_d + \beta S\be_\psi||^2\\
        &\quad + || \frac{1}{2}\be_\mu + 2L_{d\mu} \be_d  ||^2 +|| \frac{1}{2}\be_\mu - 2F_2^\top \be_P    ||^2
        \\
        &\quad +|| \frac{1}{2}\be_\mu - 2\frac{\beta^3}{\alpha} S \be_\psi    ||^2+
        || \frac{1}{2}\be_\mu +2 I_o^\top \be_\omega  ||^2 \\
        &\quad + 2\alpha \begin{bmatrix}
        \be_d^\top & \be_P^\top
        \end{bmatrix} \underbrace{\begin{bmatrix}
        T_d & F_2^\top +\frac{1}{2\alpha} I_o^\top  A_{\mathcal{G}}   \\  F_2+\frac{1}{2\alpha}  A_{\mathcal{G}}^\top I_o   & T_P
        \end{bmatrix}}_{:= H_2(\bd)} \begin{bmatrix}
        \be_d \\ \be_P
        \end{bmatrix} 
    \end{split}
\end{align}
where 
\begin{subequations}
\begin{align}
    T_d &: = \frac{1}{2\alpha}L_d -\frac{(\beta-1)^2}{2\alpha\beta^2}I-
    \frac{1}{2\alpha}I-\frac{2}{\alpha} L_{d\mu}L_{d\mu}\\
     T_P&: =  \frac{1}{2\alpha} L_P-\frac{\beta^2}{2\alpha}U_AU_A^\top-\frac{2}{\alpha}F_2F_2^\top \\
      T_\psi&: = L_\psi-\beta^2SS -\frac{4\beta^6}{\alpha^2}SS\\ 
      T_\omega&: = L_\omega - \underbrace{\frac{1}{\beta^2} H_1 V_A\Sigma_A^2 V_A^\top H_1^\top-A_{\mathcal{G}}F_2 F_2^\top  A_{\mathcal{G}}^\top -4 I_o I_o^\top}_{:=H_3} \\
       T_\mu&: = L_\mu -I
\end{align}
\end{subequations}

For equation (\ref{drd}), when parameter $\alpha$ is sufficiently large and parameter $\beta$ is positively sufficiently small, we have

\noindent 1) $T_\psi \succeq 0$ because 
    \begin{align*}
        T_\psi & = \beta \left[(2-\beta -\frac{4\beta^5}{\alpha^2}) SS-\beta U_S U_S^\top   \right] \\
        & \succeq \beta U_S {\left[ (2-\beta -\frac{4\beta^5}{\alpha^2}) (\sigma_S^{\min})^2-\beta  \right]} U_S^\top\succeq 0
    \end{align*}
    where $\sigma_S^{\min}$ is the smallest positive eigenvalue of $S$.

\noindent 2) $T_\omega\succeq 0$ because $||H_3||$ is bounded when $\beta>0$ is small and fixed, and thus
    \begin{align*}
        T_\omega &\succeq   \left(2\alpha\cdot  \min_{i\in\mathcal{G}}\{D_i\}- 2||A_{\mathcal{G}}A_{\mathcal{G}}^\top||-\beta^2 - ||H_3||\right)\cdot I \\
        & \succeq 0\quad (\text{when $\alpha$ is sufficiently large})
    \end{align*}
    
\noindent 3) $T_\mu \succeq \left(1 -2\beta ||S||^2-\beta^2\right)\cdot  I\succeq 0$. 
   
\noindent 4) We further claim that $H_2(\bd)\succeq 0$ for any $\bd\in\mathbb{R}^{|\mathcal{N}|}$. This can be shown by using the Schur Complement Theorem. By Assumption \ref{ass:strong}, we have
    \begin{align*}
    \begin{split}
                T_d&\succeq  \left( u-\frac{3+\beta^2}{2\alpha}
        -\frac{(\beta-1)^2}{2\alpha\beta^2}-\frac{2}{\alpha}||L_{d\mu} ||^2
        \right)I  +F_1 \\
        &\succeq \frac{1}{2}u I + F_1 \succ 0
    \end{split}
    \end{align*}
   Then consider the  Schur complement of the block $T_P$ in $H_2(\bd)$, which is 
    \begin{align*}
        & T_P - (F_2+\frac{1}{2\alpha}A_{\mathcal{G}}^\top I_o )\, T_d^{-1} (F_2^\top+ \frac{1}{2\alpha}I_o^\top A_{\mathcal{G}} )\\
         \succeq& A_{\mathcal{L}}^\top D_{\mathcal{L}}^{-1}A_{\mathcal{L}} +\frac{1}{\alpha}A_{\mathcal{G}}^\top A_{\mathcal{G}} -\frac{\beta^2}{\alpha}U_A U_A^\top -\frac{2}{\alpha} F_2F_2^\top \\ &
        - (F_2+\frac{1}{2\alpha}A_{\mathcal{G}}^\top I_o )( \frac{u}{2} I +F_1  )^{-1} (F_2^\top+ \frac{1}{2\alpha}I_o^\top A_{\mathcal{G}} )\\
     =&  A_{\mathcal{L}}^\top D_{\mathcal{L}}^{-1}A_{\mathcal{L}} +\frac{1}{\alpha}A_{\mathcal{G}}^\top A_{\mathcal{G}} -\frac{\beta^2}{\alpha}U_A U_A^\top -\frac{2}{\alpha} A_{\mathcal{L}}^\top D_{\mathcal{L}}^{-2}A_{\mathcal{L}} \\&-\frac{1}{2\alpha^2 u} A_{\mathcal{G}}^\top A_{\mathcal{G}}- A_{\mathcal{L}}^\top D_{\mathcal{L}}^{-1}(\frac{u}{2}I + D_{\mathcal{L}}^{-1})^{-1}D_{\mathcal{L}}^{-1} A_{\mathcal{L}}\\
     \succeq &  
     A_{\mathcal{L}}^\top  \left[ (1-\frac{D_{\max}}{2\alpha} - \frac{2 }{\alpha D_{\min} })I -(\frac{u}{2} D_{\mathcal{L}} + I)^{-1}   \right]D_{\mathcal{L}}^{-1} A_{\mathcal{L}} 
     \\
    & +\frac{1}{2\alpha} \left(   A_{\mathcal{L}}^\top A_{\mathcal{L}}  + A_{\mathcal{G}}^\top A_{\mathcal{G}}-2\beta^2 U_A U_A^\top \right)\\
    \succeq  &   \left[ (1-\frac{D_{\max}}{2\alpha} - \frac{2 }{\alpha D_{\min} }) -\frac{1}{1+ u/2 \cdot D_{\min}}  \right] A_{\mathcal{L}}^\top D_{\mathcal{L}}^{-1} A_{\mathcal{L}} 
     \\
     & + \frac{1}{2\alpha}  U_A\left( \Sigma_A \Sigma_A- 2\beta^2     I      \right)U_A^\top \succeq 0
    \end{align*}
   where $D_{\min} = \min_{i\in \mathcal{L}} D_i$ and $D_{\max} = \max_{i\in \mathcal{L}} D_i$. Thus $H_2 \succeq 0$.

By the arguments 1) - 4) above and equation (\ref{drd}),  we have 
$\be^\top R(\bd) \, \be \geq 0$ for any $\be$, which shows that $R(\bd)\succeq 0$. Thus Lemma \ref{le-exp} is proved.

}

\section{Proof of Proposition \ref{pro:bound}} \label{app:pro}
    
Consider the following Lagrangian function of the reduced OLC problem (\ref{op2}) with the dual variables $\bpi$ and $\bnu$:
\begin{align} \label{eq:lagred}
\begin{split}
     L(\bd,\bw,\bP,\bpsi,\bpi,\bnu)  &= c(\bd) + \frac{1}{2}\bw^\top D \bw \\
    & + \bpi^\top (\bd -\bP^{in} + D\bw + A\bP)\\
    &+ \bnu^\top (\bd-\bP^{in}+ S\bpsi)
\end{split}
\end{align}

To deal with the  non-uniqueness of the saddle points, define $ \tilde{\bP}: = U_A^\top \bP $ and $\tilde{\bpsi}: = U_S^\top \bpsi$ based on the compact singular value decomposition (\ref{svd}) (\ref{egd}), thus we have 
\begin{align*}
    A\bP =  V_A\Sigma_A  \tilde{\bP},\quad S \bpsi = U_S \Sigma_S \tilde{\bpsi} 
\end{align*}
and the corresponding optimal $\tilde{\bP}_*$ and $\tilde{\bpsi}_*$ are unique. Substitute $V_A\Sigma_A  \tilde{\bP}$ and $ U_S \Sigma_S \tilde{\bpsi}$ for $A\bP$ and $S\bpsi$ in the Lagrangian function (\ref{eq:lagred}), respectively.
Then the KKT conditions of the reduced OLC problem (\ref{op2}) are given by
\begin{subequations}\label{eq:KKT}
\begin{align} 
 \frac{\partial L}{\partial \bd} & = \nabla c(\bd_*) + \bpi_* + \bnu_* =0 \\
  \frac{\partial L}{\partial \bw}& = D\bw_* + D \bpi_* =0 \\
    \frac{\partial L}{\partial \tilde\bP} & =  \Sigma_AV_A^\top \bpi_*= 0 \\
 \frac{\partial L}{\partial \tilde\bpsi} & = \Sigma_S U_S^\top\bnu_*=0 \\
  \frac{\partial L}{\partial \bpi} & = \bd_*+D\bw_*+V_A\Sigma_A  \tilde{\bP}_*-\bP^{in} = 0 \\
 \frac{\partial L}{\partial \bnu}& = \bd_*+ U_S \Sigma_S \tilde{\bpsi}_* - \bP^{in}=0. 
\end{align}
\end{subequations}

Define $\by_*:= [\bd_*; \bw_*; \tilde\bP_*; \tilde \bpsi_*; \bpi_*; \bnu_*]$ as the optimal solution satisfying the KKT conditions (\ref{eq:KKT}).
Then the KKT conditions (\ref{eq:KKT}) can be equivalently rewritten as the compact form:
\begin{align*}
    \bm{h}(\by_*(t)) = E\bP^{in}(t) 
\end{align*}
where $E: = [\bm{0}, \bm{0}, \bm{0}, \bm{0}, I, I]^\top $. Thus we have 
\begin{align*}
  \bm{\nabla}_{\by_*}\!  \bm{h} \cdot \frac{d{\by}_*(t)}{dt} = E\frac{d{\bP}^{in}(t)}{dt}
\end{align*}
where 
\begin{align*}
    \bm{\nabla}_{\by_*}\!  \bm{h} = \begin{bmatrix}
    \nabla^2 c(\bd_*) & 0 & 0 & 0 & I & I\\
    0 & D & 0 & 0 & D & 0\\
    0 & 0 & 0& 0 &  \Sigma_S V_A^\top & 0\\
    0 & 0 & 0 & 0 & 0 & \Sigma_S U_S^\top \\
    I & D & V_A \Sigma_A & 0 & 0 & 0\\
    I & 0 & 0 & U_S \Sigma_S & 0 & 0 
    \end{bmatrix}.
\end{align*}
Since $u I \preceq \nabla^2 c(\bd_*)\preceq \ell I$ for any $\bd_*$ due to Assumption \ref{ass:strong}, matrix $  \bm{\nabla}_{\by_*}\!  \bm{h} $ is nonsingular according to \cite[Theorem 3.2]{ref:nonsing}. Thus we have
\begin{align*}
  ||\frac{d{\by}_*(t)}{dt}||\leq \frac{||E||}{\sigma_{\min}(  \bm{\nabla}_{\by_*}\!  \bm{h}  )} \cdot ||\frac{d{\bP}^{in}(t)}{dt}|| \leq \frac{\sqrt{2}}{\gamma_h}\cdot||\frac{d{\bP}^{in}(t)}{dt}|| 
\end{align*}
where $\sigma_{\min}(  \bm{\nabla}_{\by_*}\!  \bm{h}  )$ is the smallest singular value of $\bm{\nabla}_{\by_*}\!  \bm{h} $, and the second inequality  is due to  \cite[Proposition 2.2]{ref:smsing}, which shows that $\sigma_{\min}(  \bm{\nabla}_{\by_*}\!  \bm{h}  )\geq \gamma_h$ for some positive constant $\gamma_h$ that depends on $D, u, \ell, U_A\Sigma_A, U_S\Sigma_S$.

The rest of the proof is to bound $ ||\frac{d{\bz}^*(t)}{dt}||_Q$ by $ ||\frac{d{\by}_*(t)}{dt}||$. Since the primal variables of the  equilibrium points are optimal solutions to the reduced OLC problem (\ref{op2}), we have the following relation  between $\bz^* :=\left[\bd^*; \bP^*;\bpsi^*;\bw_{\mathcal{G}}^*; \bmu^* \right]$ and $\by_*:=[\bd_*; \bw_*; \tilde\bP_*; \tilde \bpsi_*; \bpi_*; \bnu_*]$: 
\begin{align*}
  \bd^* &=\bd_*,\ U_A^\top \bP^* = \tilde\bP_*,\ U_S^\top \bpsi^* = \tilde \bpsi_*,\\ \bw_{\mathcal{G}}^*& = [I,\, \bm{0}]\, \bw_*, \ \bmu^* = \nabla c(\bd_*) - \bw_*  
\end{align*}
where the last equality  is due to equation (\ref{gra:d2}). Therefore, for the time derivatives, we have
\begin{align*}
\dot{\tilde{\bz}}^* : &= \begin{bmatrix}
\dot{\bd}^*\\
U_A^\top \dot\bP^*\\
U_S^\top \dot\bpsi^*\\
\dot\bw_{\mathcal{G}}^*\\
 \dot{\bmu}^*
\end{bmatrix} \\
&= \begin{bmatrix}
\dot{\bd}_*\\ \dot{\tilde\bP}_*\\
\dot{\tilde \bpsi}_*\\
[I,\, \bm{0}]\, \dot{\bw}_*\\
\nabla^2 c(\bd_*) \dot{\bd}_* - \dot{\bw}_*
\end{bmatrix} = \underbrace{\begin{bmatrix}
 E_d \\E_P\\E_\psi\\ [I,\, \bm{0}]E_{w}\\ \nabla^2 c(\bd_*) E_d -E_w
\end{bmatrix}}_{:= E_y(\bd_*)}\dot{\by}_*
\end{align*}
where $E_d, E_P, E_\psi,E_{w} $ are corresponding constant matrices, e.g., $E_d: = [I, \bm{0}, \bm{0}, \bm{0}, \bm{0}, \bm{0} ]$.

For any $\dot{\bz}^*$, we have 
\begin{align*}
   & ||\dot{\bz}^*||_Q^2 =  \dot{\bz}^{*\top} Q \dot{\bz}^* \\
   =& \dot{\tilde{\bz}}^{*\top}\underbrace{ \begin{bmatrix}
    \alpha I & \bo & \bo & \bo & I\\
    \bo & \alpha I & \bo & U_A^\top A_\mathcal{G}^\top  & \bo\\
    \bo & \bo & \alpha I & \bo & -\beta \Sigma_SU_S^\top\\
    \bo & A_\mathcal{G}U_A & \bo & \alpha I &  \bo\\
    I &  \bo & -\beta U_S\Sigma_S & \bo& \alpha I
    \end{bmatrix}}_{:=\hat{Q}} \dot{\tilde{\bz}}^*\\
    = & (E_y(\bd_*) \dot{\by}_*)^\top \hat{Q}   E_y(\bd_*) \dot{\by}_* = \dot{\by}_*^\top \left(E_y(\bd_*)^\top \hat{Q} E_y(\bd_*)  \right)\dot{\by}_*.
\end{align*}
Since $u I \preceq \nabla^2 c(\bd_*) \preceq \ell I$ for any $\bd_*$ due to Assumption \ref{ass:strong}, it can show that matrix $E_y(\bd_*)^\top \hat{Q} E_y(\bd_*)$ is positive semi-definite and its norm is upper bounded for any $\bd_*$, i.e.,  there exists a positive constant $\gamma_y$ such that $$||E_y(\bd_*)^\top \hat{Q} E_y(\bd_*) ||\leq \gamma_y.$$

As a result, we obtain
\begin{align*}
  ||\frac{d{\bz}^*}{dt}||_Q \leq \sqrt{\gamma_y} \cdot ||\frac{d{\by}_*}{dt}|| \leq   \frac{\sqrt{2\gamma_y}}{\gamma_h}\cdot||\frac{d{\bP}^{in}(t)}{dt}||.
\end{align*}
Let $\eta: = \frac{\sqrt{2\gamma_y}}{\gamma_h}$, then Proposition \ref{pro:bound} is proved.

\section{Theorem on Inaccurate Damping} \label{pf-robustD}

\begin{theorem}\label{thm-robustD}
Under Assumption \ref{assumption:feasibility} and \ref{ass:strong}, and the following conditions are met: 

\noindent i) Infinitely large step sizes $\epsilon_{d_i}$ are used for \eqref{cm:d}, which is then reduced to the following algebraic equation:
\begin{eqnarray}\nonumber
-c_i^\prime\left(d_i\right)+\eta_i\omega_i+\frac{\epsilon_{\mu_i}}{K_i}r_i-\gamma_i^++\gamma_i^-  =0.
\end{eqnarray}

\noindent ii) An inaccurate $\tilde D_i = D_i + \delta a_i$ is used instead of $D_i$ in \eqref{cm:r}, and the inaccuracy $\delta a_i$ satisfies:
\begin{eqnarray}\label{cond:uncertainD}
\delta a_i \in 2\left( \underline d' - \sqrt{\underline d'^2 + \underline d' D_{\mathrm{min}}}, ~\underline d' + \sqrt{\underline d'^2 + \underline d' D_{\mathrm{min}}}\right)
\end{eqnarray}
where $\underline d' := 1/\ell$ and $D_{\mathrm{min}}:=\min_{i \in \mathcal{N} } D_i$.  

\noindent \textcolor{black}{iii) Every node $i \in \mathcal{N}$ has adequate load control capacity such that its control action $d_i(t)$ never hits the limit of $[\underline d_i,~\overline d_i]$ at any time $t$.} 

Then the closed-loop system \eqref{dynamic} and \eqref{cm} globally asymptotically converges to a point $\left(\bd^*, \bw^*, \bP^*,  \bpsi^*, \bga^*, \br^*,\bsi^*\right)$, where $\left(\bd^*, \bw^*, \bP^*,  \bpsi^*\right)$ is an optimal solution of problem \eqref{op}.
\end{theorem}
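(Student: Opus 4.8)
The idea is to argue that, under conditions i)--iii), the closed loop \eqref{dynamic}, \eqref{cm} driven by the inaccurate $\tilde D_i$ is a \emph{small perturbation} of the exact solution dynamics \eqref{sol:lam}, \eqref{gra} with an \emph{unchanged} equilibrium set, and that the primal--dual Lyapunov function still decreases along trajectories precisely when \eqref{cond:uncertainD} holds; LaSalle's invariance principle then finishes. \emph{Reduction (conditions i, iii).} Condition iii) guarantees that the positive-projection terms driving $\gamma_i^\pm$ in \eqref{cm} are never triggered, so $\gamma_i^\pm(t)\to 0$ and, for the asymptotic analysis, the load-power constraint \eqref{op:power_limits} may be dropped from \eqref{op}. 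Condition i) replaces the ODE for $d_i$ by the stationarity relation $c_i'(d_i)=\omega_i+\mu_i$; by Assumption \ref{ass:strong} this defines $d_i$ as a Lipschitz, strictly increasing function of $\omega_i+\mu_i$ with slope $1/c_i''(d_i)\in[1/\ell,\,1/u]$, so $d_i$ is slaved to $(\omega_i,\mu_i)$ and carries no dynamics of its own. Substituting (and using $\epsilon_{\omega_i}=1/M_i$, $\epsilon_{P_{ij}}=B_{ij}$ as in the main text) yields a reduced closed loop in the remaining primal--dual variables which, for $\delta a_i\equiv 0$, is exactly the (projected) primal--dual gradient flow of the Lagrangian of \eqref{op} minus \eqref{op:power_limits}.

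\emph{The perturbation and the equilibrium set.} Re-deriving \eqref{cm:r} with $\tilde D_i=D_i+\delta a_i$ shows that the \emph{only} change to the solution dynamics is an extra additive term $K_i\delta a_i\,\omega_i$ in $\dot r_i$, equivalently $\epsilon_{\mu_i}\delta a_i\,\omega_i$ in the effective $\dot\mu_i$ of \eqref{gra:mu}. At any equilibrium of the perturbed flow, $\dot P_{ij}=0$ forces a common $\omega_i\equiv\bar\omega$; summing the stationary $\dot\mu_i$ over each control area gives $\sum_{i\in\mathcal A}d_i=\sum_{i\in\mathcal A}P_i^{in}$, while summing the real-power balances over all buses gives $\sum_i d_i=\sum_i P_i^{in}-\bar\omega\sum_i D_i$, hence $\bar\omega\sum_i D_i=0$ and $\omega_i=0$ for every $i$ --- exactly as in the proof that \eqref{olc} and \eqref{op} are equivalent. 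Since the perturbation term vanishes whenever $\omega_i=0$, the perturbed flow has precisely the same equilibrium set as the exact one; by Assumptions \ref{assumption:feasibility} and \ref{ass:strong} strong duality holds, so, as in the proof of Theorem \ref{thm1}, each such equilibrium $(\bd^*,\bw^*,\bP^*,\bpsi^*,\bmu^*,\bsi^*)$ is primal--dual optimal for \eqref{op}. It therefore suffices to prove convergence to this set.

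\emph{Lyapunov decrease under \eqref{cond:uncertainD}.} I would use the Lyapunov function employed for (projected) primal--dual dynamics in \cite{pp,pf1}: inverse-step-size-weighted squared deviations of $(\bw_\mathcal{G},\bP,\bpsi,\bmu,\bsi)$ from a fixed equilibrium. Differentiating along the reduced \emph{perturbed} dynamics, the unperturbed part gives the standard estimate $\dot V\le -\big(\nabla c(\bd)-\nabla c(\bd^*)\big)^\top(\bd-\bd^*)-\sum_i D_i\omega_i^2\le -\underline d'\sum_i\big(c_i'(d_i)-c_i'(d_i^*)\big)^2-\sum_i D_i\omega_i^2$, with cross terms between distinct blocks cancelling by the skew structure and $\underline d'=1/\ell$ coming from $c_i''\le\ell$; the perturbation contributes the bilinear term $\sum_i\delta a_i\,\omega_i\,(\mu_i-\mu_i^*)$. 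Substituting the stationarity relation $\mu_i-\mu_i^*=\big(c_i'(d_i)-c_i'(d_i^*)\big)-\omega_i$ (valid since $\omega_i^*=0$), the whole $\omega$--$d$ part of $\dot V$ splits into per-bus quadratic forms in $\big(\omega_i,\,c_i'(d_i)-c_i'(d_i^*)\big)$; with $D_i\ge D_{\min}$, each is negative semidefinite iff $\tfrac{1}{4}\delta a_i^2-\underline d'\,\delta a_i-\underline d'D_{\min}<0$, whose roots are $2\big(\underline d'\pm\sqrt{\underline d'^2+\underline d'D_{\min}}\big)$, i.e.\ iff $\delta a_i$ lies in the interval \eqref{cond:uncertainD}. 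Hence $\dot V\le 0$, and $\dot V=0$ forces $\omega\equiv0$ and, by strict monotonicity of each $c_i'$, $\bd\equiv\bd^*$.

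\emph{Conclusion and main obstacle.} By the invariance principle for projected dynamical systems (as invoked for Theorem \ref{thm1} through \cite{pp,pf1}), every trajectory converges to the largest invariant set in $\{\dot V=0\}$; on that set $\omega\equiv0$, $\bd\equiv\bd^*$, and the remaining equations pin down $A\bP$, $S\bpsi$, $\bmu$, $\bsi$ up to the benign kernel directions noted in the remark after Theorem \ref{thm3}, so this set is contained in the equilibrium set. Together with the previous step this gives global asymptotic convergence of \eqref{dynamic}, \eqref{cm} to a point $(\bd^*,\bw^*,\bP^*,\bpsi^*,\bga^*,\br^*,\bsi^*)$ with $(\bd^*,\bw^*,\bP^*,\bpsi^*)$ optimal for \eqref{op}. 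I expect the crux to be the Lyapunov step: choosing the weights in $V$ so that, after the nonlinear $d$-slaving of condition i) and the mismatch injection of condition ii), $\dot V$ genuinely collapses to decoupled per-bus $2\times2$ forms whose sign condition is \emph{exactly} \eqref{cond:uncertainD} with $1/\ell$ and $D_{\min}$ in the stated places --- in particular bookkeeping how the gaps between $D_i$ and $D_{\min}$ and between $1/\ell$ and $1/u$ are absorbed, and how the $\bsi$-projection terms behave when differentiating $V$; making the ``inactive $\gamma_i^\pm\Rightarrow$ constraint removable'' reduction of the first step fully rigorous is a secondary technicality.
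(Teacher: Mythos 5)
Your overall strategy coincides with the paper's: use conditions i) and iii) to drop the $\bga^{\pm}$ dynamics and slave $\bd$ to $\bw+\bmu$, observe that the inaccurate $\tilde D_i$ enters the closed loop only as an extra term $\delta a_i\omega_i$ in the effective $\dot\mu_i$, take the quadratic Lyapunov function $\tfrac12(\bze-\bze^*)^\top\Xi_\zeta^{-1}(\bze-\bze^*)$ on $\bze=[\bP;\bpsi;\bw_\mathcal{G};\bmu;\bsi]$, show the relevant quadratic form stays negative semidefinite exactly under \eqref{cond:uncertainD}, and finish with the invariance principle. Your generator-bus arithmetic is right: the symmetric part of the $(\omega_i,\mu_i)$ block is negative semidefinite iff $\tfrac14\delta a_i^2-\underline{d}'\delta a_i-\underline{d}'D_i\le 0$, whose worst case over $c_i''\le\ell$ and $D_i\ge D_{\min}$ reproduces the stated interval; and your equilibrium argument is essentially sound once you fix the small slip that the per-area sum of the stationary $\dot\mu_i$ picks up $\bar\omega\sum_i\delta a_i$, so one concludes $\bar\omega\sum_i(D_i+\delta a_i)=0$ and needs $D_i+\delta a_i>0$ (which \eqref{cond:uncertainD} does guarantee) before setting $\bar\omega=0$.

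The genuine gap is the Lyapunov-decrease step for the load buses and the projected $\bsi$ terms, which is precisely the part the paper does not redo but imports from \cite[Theorem 15]{the}. For $i\in\mathcal{L}$, $\omega_i$ is not a state of the reduced flow: it is the implicit solution of \eqref{sol:lam} jointly with the slaving relation $c_i'(d_i)=\omega_i+\mu_i$, hence a nonlinear function of $(\bP,\bmu_\mathcal{L})$ coupled across buses through $A_\mathcal{L}$. Consequently the dissipation $-D_i\omega_i^2$ and the mismatch $\delta a_i\omega_i(\mu_i-\mu_i^*)$ at load buses cannot be collected into decoupled per-bus $2\times2$ forms in $\bigl(\omega_i,\,c_i'(d_i)-c_i'(d_i^*)\bigr)$ as you assert; what must be shown is negative semidefiniteness of the $(\bP,\bmu_\mathcal{L})$ block of the mean-value matrix $H(\bze)$ (the analogue of $H_{P,\omega_\mathcal{L}}$ in \cite{the}), together with the check that the $\bsi$-projection terms only help. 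You flag this as "the crux," but as written the per-bus splitting is the step that would fail, and the paper closes it only by citation to \cite{the}. Relatedly, the paper's discussion shows that your "reduction" step is not a secondary technicality: with the derivative form \eqref{gra:d} the diagonal $\bmu$ block of $H(\bze)$ is zero, so no choice of weights makes $H$ negative semidefinite once the off-diagonal $\delta A$ terms appear, and retaining active $\bga^{\pm}$ introduces strictly positive eigenvalues for arbitrarily small $\delta A$; this is exactly why conditions i) and iii) are imposed.
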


\begin{proof}

For $i\in\mathcal{N}$, an inaccurate damping coefficient $\tilde D_i = D_i + \delta a_i$ is used instead of $D_i$ in \eqref{cm:r}. The closed-loop system \eqref{dynamic}, \eqref{cm} is then equivalent to (\ref{sol:lam}), \eqref{gra} except that \eqref{gra:mu} becomes
\begin{align}
\begin{split}
    \dot{\mu}_i= \epsilon_{\mu_i}\left(P^{in}_i-d_i + \delta a_i \omega_i -\sum_{j:ij\in\mathcal{E}_{in}}B_{ij}\left(\psi_{i}-\psi_j\right)  \right.\\
		\phantom{=\;\;}\left.  +\sum_{k:ki\in \mathcal{E}_{in}}B_{ki}\left(\psi_k-\psi_i\right)\right) \label{eq:virtual_freq_uncertainD}
\end{split}
\end{align} 
with the additional term $\delta a_i \omega_i$.

\add{By condition iii) of Theorem \ref{thm-robustD}}, we have $d_i(t) \in (\underline d_i,\overline d_i)$ and $\gamma_i^+(t) \equiv \gamma_i^-(t) \equiv 0$, for all $t\geq 0$, given that their initial values satisfy $d_i(0) \in (\underline d_i,\overline d_i)$ and $\gamma_i^+(0) = \gamma_i^-(0)=0$. Thus the dynamics of $\bga^+$ and $\bga^-$ can be ignored from \eqref{gra}. 
Further by condition i), the control law \eqref{gra:d} is modified as (\ref{md-l})
\begin{eqnarray} \label{md-l}
d_i &=& \left(c_i^\prime\right)^{-1} \left(\omega_i+\mu_i \right) \qquad \forall i \in \mathcal{N} \label{eq:d_control_instant}
\end{eqnarray}

Define $\bze:=[\bP; \bpsi; \bw_{\mathcal{G}};\bmu; \bsi]$ and $
\tilde L(\bze) := \min_{\bd} \overline L(\bd, \bze) $, 
where the minimizer $\bd$ is given by \eqref{eq:d_control_instant} and  $\frac{\partial\overline{L}}{\partial \bd}(\bd,\bze) = 0 $.

The modified closed-loop system with inaccurate $D_i$, described by \eqref{sol:lam}, \eqref{gra:lambda}, \eqref{gra:P}, \eqref{gra:psi}, \eqref{gra:sigma_plus}, \eqref{gra:sigma_minus}, \eqref{eq:virtual_freq_uncertainD}, \eqref{eq:d_control_instant}, can be written as:   
\begin{align} \label{dyn_uncertianD}
\begin{split}
&\dot{\bP}=-\Xi_P\frac{\partial\tilde{L}}{\partial \bP}, \quad \dot{\bpsi}=-\Xi_\psi\frac{\partial\tilde{L}}{\partial \bpsi}, \quad
\dot{\bw}_\mathcal{G}=\Xi_{\omega_\mathcal{G}}\frac{\partial\tilde{L}}{\partial \bw_\mathcal{G}}\\ 
&\dot{\bmu}=\Xi_\mu\left[\frac{\partial\tilde{L}}{\partial \bmu} + \delta A \bw \right], \quad \ \,
\dot{\bsi}=\Xi_{\sigma}\left[\frac{\partial\tilde{L}}{\partial {\bsi}}\right]^+_{\bsi}
\end{split}
\end{align}
where $\delta A:=\mathrm{diag}(\delta a_i)_{i\in \mathcal{N}}$. The system \eqref{dyn_uncertianD} can be written more compactly as:
\begin{eqnarray} \label{dyn_uncertianD_compact}
\dot \bze &=& \Xi_\zeta \left[ f(\bze) \right]_{\bsi}^+
\end{eqnarray}
where $\Xi_\zeta:=\mathrm{blockdiag}(\Xi_P,\Xi_\psi, \Xi_{\omega_\mathcal{G}}, \Xi_\mu, \Xi_\sigma)$ and 
\begin{eqnarray}\nonumber
f(\bze):=\left[ -\frac{\partial\tilde{L}}{\partial \bP}^\top,  -\frac{\partial\tilde{L}}{\partial \bpsi}^\top,  \frac{\partial\tilde{L}}{\partial \bw_\mathcal{G}}^\top, \left(\frac{\partial\tilde{L}}{\partial \bmu} + \delta A \bw \right)^\top,  \frac{\partial\tilde{L}}{\partial {\bsi}}^\top\right]^\top.
\end{eqnarray}
Note that in the vector $\bw=[\bw_\mathcal{G}; \bw_\mathcal{L}]$, only $\bw_\mathcal{G}$ is a variable of the system \eqref{dyn_uncertianD} or \eqref{dyn_uncertianD_compact}, whereas $\bw_\mathcal{L}$ is the abbreviation of a vector-valued function $\bw_\mathcal{L}(\bze)$ defined by the equation:
\begin{eqnarray}\nonumber
P^{in}_i \!-\! d_i(\omega_i \!+\! \mu_i) \!-\! D_i \omega_i \!-\! \sum_{j:ij\in \mathcal{E}}P_{ij}\!+\! \sum_{k:ki\in \mathcal{E}}P_{ki} = 0,~ \forall i \in \mathcal{L}
\end{eqnarray}
where $d_i (\omega_i + \mu_i)$ is defined by \eqref{eq:d_control_instant}.

The rest of the proof follows the same technique as the proof of \cite[Theorem 15]{the}, and therefore we only provide a sketch for it. 
Consider a Lyapunov function candidate:
\begin{eqnarray}\nonumber
U(\bze) = \frac{1}{2} \left(\bze - \bze^*\right)^\top \Xi_{\zeta}^{-1} \left(\bze- \bze^*\right).
\end{eqnarray}
We first show that under the dynamics \eqref{dyn_uncertianD_compact}, the time derivative of $U(\bze)$ is upper-bounded by:
\begin{eqnarray}\nonumber
\dot U(\bze) \leq \int_0^1 (\bze-\bze^*)^\top [H(\bze(s))](\bze-\bze^*) ds
\end{eqnarray} 
where $\bze(s) = \bze^* +s (\bze- \bze^*)$, and $H(\bze)$ is a matrix which is zero everywhere except:

\noindent i) a block submatrix corresponding to variables $(\bP, \bmu_\mathcal{L})$, which is the same as $H_{P,\omega_\mathcal{L}}(z)$ in \cite{the}; 

\noindent ii) a block submatrix corresponding to variables $(\bmu_\mathcal{G}, \bw_\mathcal{G})$, which is the same as $H_{\omega_\mathcal{G}, \nu_\mathcal{G}}(z)$ in \cite{the}.

It is shown in \cite{the} that under condition \eqref{cond:uncertainD}, the matrix $H(\bze)$ is negative semi-definite. Applying the invariance principle, the convergence result in Theorem \ref{thm-robustD} can be proved. 

\end{proof}

\textbf{Discussions.}  1)\emph{Why are $\bga^+$ and $\bga^-$ ignored?} 
If \eqref{gra:gamma_plus}--\eqref{gra:gamma_minus} are considered, then instead of proving negative semi-definiteness of the block submatrices corresponding to $(\bP, \bmu_\mathcal{L})$ and $(\bmu_\mathcal{G}, \bw_\mathcal{G})$, we have to prove negative semi-definiteness of the block submatrices corresponding to $(\bP, \bmu_\mathcal{L}, \bga^+_\mathcal{L}, \bga^-_\mathcal{L})$ and $(\bmu_\mathcal{G}, \bw_\mathcal{G}, \bga^+_\mathcal{G}, \bga^-_\mathcal{G})$. However, one can show that the latter two larger block submatrices have strictly positive eigenvalues for arbitrarily small $\delta A$, which makes the proof technique fail.

 2) \emph{Why is the control law modified from the derivative form \eqref{gra:d} to the stationary form \eqref{eq:d_control_instant}?}
With the derivative form \eqref{gra:d}, one can show that in $H(\bze)$, the block at the diagonal position corresponding to $\bmu$ is zero, and hence it is impossible to make $H(\bze)$ negative semi-definite when the off-diagonal blocks containing $\delta A$ are non-zero.



\begin{thebibliography}{1}
 \bibitem{3m}
Y. G. Rebours, D. S. Kirschen, M. Trotignon and S. Rossignol, ``A survey of frequency and voltage control ancillary services: Part I: Technical features," \emph{IEEE Trans. Power Syst.}, vol. 22, no. 1, pp. 350-357, Feb. 2007.

\bibitem{r_r}
B. M. Sanandaji, T. L. Vincent, and K. Poolla, ``Ramping rate flexibility
of residential HVAC loads," \emph{IEEE Trans. Sustain. Energy}, vol. 7, no. 2,
pp. 865-874, Apr. 2016.

 \bibitem{ev1}
H. Liu, Z. Hu, Y. Song, and J. Lin, ``Decentralized vehicle-to-grid control
for primary frequency regulation considering charging demands,"
\emph{IEEE Trans. Power Syst.}, vol. 28, no. 3, pp. 3480-3489, Aug. 2013.

\bibitem{ev2}
H. Liu, J. Qi, J. Wang, P. Li, C. Li and H. Wei, ``EV dispatch control for supplementary frequency regulation considering the expectation of EV owners," \emph{IEEE Trans. Smart Grid}, vol. PP, no. 99, pp. 1-1.

\bibitem{hvac1}
 I. Beil, I. Hiskens, and S. Backhaus, ``Frequency regulation from commercial
building HVAC demand response," \emph{Proceedings of the IEEE}, vol. 104, no. 4, pp. 745-757, Apr. 2016.


\bibitem{bat1}
A. Oudalov, D. Chartouni, and C. Ohler, ``Optimizing a battery energy storage system for primary frequency control," \emph{IEEE Trans. Power
Syst.}, vol. 22, no. 3, pp. 1259-1266, Aug. 2007. 

\bibitem{bat2}
U. Akram and M. Khalid, ``A coordinated frequency regulation framework based on hybrid battery-ultracapacitor energy storage technologies," \emph{IEEE Access}, vol. PP, no. 99, pp. 1-1.

\bibitem{tcl}
 Y. J. Kim, L. K. Norford, and J. L. Kirtley, ``Modeling and analysis
of a variable speed heat pump for frequency regulation through direct
load control," \emph{IEEE Trans. Power Syst.}, vol. 30, no. 1, pp. 397-408,
Jan. 2015.

\bibitem{p1}
Y. Lin, P. Barooah, S. Meyn, and T. Middelkoop, ``Experimental evaluation
of frequency regulation from commercial building HVAC systems," \emph{IEEE
	Trans. Smart Grid}, vol. 6, no. 2, pp. 776-783, Mar. 2015.

\bibitem{p2}
G. Heffner, C. Goldman, and M. Kintner-Meyer, ``Loads providing ancillary services: review of international experience," Lawrence Berkeley National Laboratory, Berkeley, CA, USA, \emph{Tech. Rep.}, 2007.

\bibitem{p3}
D. Hammerstrom et al., ``Pacific Northwest GridWise testbed demonstration projects, part II: Grid friendly appliance project," Pacific Northwest
Nat. Lab., Richland, WA, USA, \emph{Tech. Rep. PNLL-17079}, Oct. 2007.

\bibitem{cen1}
M. Aldeen and H. Trinh, ``Load frequency control of interconnected power systems
via constrained feedback control scheme," \emph{Computers Elect. Engng}, vol. 20, no. 1, pp. 71-88, 1994.

\bibitem{cen2}
H. Shayeghi, H. Shayanfar, ``Application of ANN technique based on $\mu$-synthesis to load frequency control of interconnected power system," \emph{Electr
Power Energy Syst}, vol. 28, no. 7, pp. 503-511, Sept. 2006.

\bibitem{lr2}
A. Pappachen and A. Peer Fathima, ``Critical research areas on load
frequency control issues in a deregulated power system: A state-of-the-art-of-review," \emph{Renewable Sustain. Energy Reviews}, vol. 72, pp. 163-177, May 2017.

\bibitem{dec1}
C. Wu and T. Chang, ``ADMM approach to asynchronous distributed frequency-based load control," in \emph{Proc. 2016 IEEE Global Conference on Signal and Information Processing (GlobalSIP)}, 2016, pp. 931-935.

\bibitem{dec2}
S. Abhinav, I. Schizas, F. Ferrese and A. Davoudi, ``Optimization-based AC microgrid synchronization," \emph{IEEE Trans. Ind. Informat.}, vol. 13, no. 5, pp. 2339-2349, Oct. 2017.

\bibitem{dec3}
X. Zhang and A. Papachristodoulou, ``A real-time control framework for smart power networks: Design methodology and stability," \emph{Automatica}, vol. 58, pp. 43-50, Aug. 2015.

\bibitem{pi1}
M. Andreasson, D. V. Dimarogonas, H. Sandberg, and K. H. Johansson, ``Distributed control of networked dynamical systems: Static feedback, integral action and consensus," \emph{IEEE Trans. Autom. Control}, vol. 59,
no. 7, pp. 1750-1764, Jul. 2014.

\bibitem{the}
E. Mallada, C. Zhao and S. Low, ``Optimal load-side control for frequency regulation in smart grids," \emph{IEEE Trans. Autom. Control}, vol. 62, no. 12, pp. 6294-6309, Dec. 2017.

\bibitem{re1}
N. Li, C. Zhao, and L. Chen, ``Connecting automatic generation control
and economic dispatch from an optimization view," \emph{IEEE Trans. Control
Netw. Syst.}, vol. 3, no. 3, pp. 254-264, Sept. 2016.


\bibitem{sm}
C. Zhao, U. Topcu, N. Li, and S. Low, ``Design and stability of load-side primary frequency control in power systems," \emph{IEEE Trans. Autom. Control}, vol. 59, no. 5, pp. 1177-1189, May 2014.




\bibitem{dc}
B. Zhang and Y. Zheng, \emph{Advanced Electric Power Network Analysis}, 1st ed. Cengage Learning Asia, Nov. 2010.

\bibitem{dc2}
B. Stott, J. Jardim, and O. Alsac, ``DC power flow revisited," \emph{IEEE Trans. Power Syst.}, vol. 24, no. 3, pp. 1290-1300, Aug. 2009.

\bibitem{pp}
D. Feijer and F. Paganini, ``Stability of primal-dual gradient dynamics and applications to network optimization", \emph{Automatica}, vol. 46, no. 12, pp. 1974-1981, Dec. 2010.


\bibitem{boyd2004convex}
S.~P. Boyd, L.~Vandenberghe, Convex Optimization, Cambridge University Press, 2004.




\bibitem{pf1}
A. Cherukuri, E. Mallada, and J. Cortés, ``Asymptotic convergence of constrained primal-dual dynamics," \emph{Systems \& Control Letters}, vol. 87, pp. 10-15, Jan. 2016.








\bibitem{Hao}
K. Zhang, W. Shi, H. Zhu, E. Dall'Anese and T. Başar, ``Dynamic power distribution system management with a locally connected communication network," \emph{IEEE Journal of Selected Topics in Signal Processing}, vol. 12, no. 4, pp. 673-687, Aug. 2018.

\bibitem{steven}
Y. Tang, K. Dvijotham and S. Low, ``Real-time optimal power flow,"  \emph{IEEE Trans. on Smart Grid}, vol. 8, no. 6, pp. 2963-2973, Nov. 2017.

\bibitem{guan}
G. Qu and N. Li, ``On the exponential stability of primal-dual gradient dynamics," \emph{arXiv preprint}, arXiv:1803.01825, 2018.

\bibitem{expp1}
S. K. Niederl\" ander, F. Allg\" ower and J. Cort\' es, "Exponentially fast distributed coordination for nonsmooth convex optimization," \emph{2016 IEEE 55th Conference on Decision and Control (CDC)}, Las Vegas, NV,  pp. 1036-1041, 2016.

\bibitem{expp2}
J. Cort\' es and S. K. Niederl\"ander, ``Distributed coordination for nonsmooth convex optimization via saddle-point dynamics," \emph{Journal of Nonlinear Science},
pp. 1-26, 2018.

\bibitem{pst1}
K. W. Cheung, J. Chow, and G. Rogers, Power System Toolbox ver. 3.0., Rensselaer Polytechnic Institute and Cherry Tree Scientific Software, 2009.

\bibitem{pst2}
Power System Toolbox Webpage [Online]. Available: \url{http://www.eps.ee.kth.se/personal/vanfretti/pst/Power_System_Toolbox_Webpage/PST.html}.


\bibitem{rea}
P. J. Douglass, R. Garcia-Valle, P. Nyeng, J. Østergaard, and M. Togeby,
``Smart demand for frequency regulation: Experimental results,"
\emph{IEEE Trans. Smart Grid}, vol. 4, no. 3, pp. 1713-1720, Sept. 2013.

\bibitem{numeri}
M. Benzi, G. H. Golub, and J. Liesen.  Numerical solution of saddle point problems. \emph{Acta Numerica}, pp. 1-137, 2005.

\bibitem{spectr}
N. I. M. Gould,  V. Simoncini, ``Spectral analysis of saddle point matrices with indefinite leading blocks," \emph{SIAM Journal on Matrix Analysis and Applications}, vol. 31, no. 3, pp. 1152-1171, 2010.

\bibitem{ref:nonsing}
M. Benzi, G. H. Golub, and J. Liesen.  Numerical solution of saddle point problems. \emph{Acta Numerica}, pp. 1-137, 2005.

\bibitem{ref:smsing}
 N. I. M. Gould,  V. Simoncini, ``Spectral analysis of saddle point matrices with indefinite leading blocks," \emph{SIAM Journal on Matrix Analysis and Applications}, vol. 31, no. 3, pp. 1152-1171, 2010.

\end{thebibliography}
\end{document}